\documentclass[10pt,a4paper]{amsart}
\title{Boundary $C^*$-algebras of triangle geometries}
\author{Guyan Robertson}
\address{School of Mathematics and Statistics, Newcastle University, Newcastle upon Tyne NE1 7RU, England, UK}
\email{guyanrobertson@gmx.com}
\subjclass{20F67, 46L80, 51E24}
\keywords{Euclidean building, maximal boundary, $C^*$-algebra}

\date{}

\usepackage{amsmath}
\usepackage{amscd}
\usepackage{amssymb}

\input{prepictex}
\input{pictex}
\input{postpictex}
\chardef\bslash=`\\ 

\makeatletter
\def\verbatim{\interlinepenalty\@M \@verbatim
  \leftskip\@totalleftmargin\advance\leftskip2pc
  \frenchspacing\@vobeyspaces \@xverbatim}
\makeatother
\hfuzz1pc 

\newtheorem{theorem}{Theorem}[section]
\newtheorem{corollary}[theorem]{Corollary}
\newtheorem{lemma}[theorem]{Lemma}
\newtheorem{proposition}[theorem]{Proposition}

\theoremstyle{definition}

\newtheorem{definition}[theorem]{Definition}
\newtheorem{remark}[theorem]{Remark}

\newcommand{\cl}[1]{{\mathcal{#1}}}
\newcommand{\bb}[1]{{\mathbb{#1}}}
\newcommand{\fk}[1]{{\mathfrak{#1}}}

\newcounter{picture}


\DeclareMathOperator{\coker}{coker}

\DeclareMathOperator{\conv}{conv}


\newcommand{\1}{{\bf 1}}

\newcommand{\PGL}{{\text{\rm{PGL}}}}

\newcommand{\SL}{{\text{\rm{SL}}}}

\begin{document}

\begin{abstract}
Let $\Delta$ be a building of type $\widetilde A_2$ and order $q$, with maximal boundary $\Omega$.
Let $\Gamma$ be a group of type preserving automorphisms of $\Delta$ which acts regularly on the chambers of $\Delta$.
Then  the crossed product $C^*$-algebra $C(\Omega) \rtimes \Gamma$ is isomorphic to $M_{3(q+1)}  \otimes\cl O_{q^2}\otimes\cl O_{q^2}$, where $\cl O_n$ denotes the Cuntz algebra generated by $n$ isometries whose range projections sum to the identity operator.
\end{abstract}

\maketitle

\section{Introduction}

The boundary at infinity plays an important role in studying the action of groups on euclidean buildings
and other spaces of non-positive curvature. The motivation of this article is to determine what information about a group may be recovered
from the K-theory of the boundary crossed product $C^*$-algebra. The focus is on groups of automorphisms of euclidean buildings. Groups of automorphisms of $\widetilde A_2$ buildings which act regularly on the set of chambers have been studied by several authors \cite{ kmw1, kmw2, ro1, tit}.  The existence of groups which act regularly on the set of chambers (i.e. the existence of ``tight triangle geometries'') is proved in \cite[Theorem 3.3]{ro1}.

The main result is the following, where $M_n$ denotes the algebra of complex $n\times n$ matrices and
$\cl O_n$ is the Cuntz $C^*$-algebra which is generated by $n$ isometries on a Hilbert space whose range projections sum to $I$.

\begin{theorem}\label{mainboundaryaction}
Let $\Delta$ be a building of type $\widetilde A_2$ and order $q$, with maximal boundary $\Omega$.
Let $\Gamma$ be a group of type preserving automorphisms of $\Delta$ which acts regularly on the chambers of $\Delta$.
Then  the crossed product $C^*$-algebra $\fk A_\Gamma = C(\Omega) \rtimes \Gamma$ is isomorphic to $M_{3(q+1)}  \otimes\cl O_{q^2}\otimes\cl O_{q^2}$.
\end{theorem}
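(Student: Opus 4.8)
The plan is to realise $\fk A_\Gamma = C(\Omega)\rtimes\Gamma$ as the $C^*$-algebra of the boundary action groupoid $\Gamma\ltimes\Omega$, to identify that groupoid with the path groupoid of an explicit rank-$2$ Cuntz--Krieger (tiling) system attached to the two fundamental directions of an $\widetilde A_2$ sector, and finally to show that chamber-regularity makes this system \emph{decouple} into a tensor product of two full one-sided shifts.

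First I would fix a base chamber $C_0$ with a vertex $x_0$, and use regularity to identify $\Gamma$ with the chamber set of $\Delta$ via $\gamma\mapsto\gamma C_0$. Every point of the maximal boundary $\Omega$ is a parallel class of sectors and is represented by a unique sector based at $x_0$; declaring two boundary chambers close when their sectors agree on a large ball exhibits $\Omega$ as a Cantor set with a basis of compact open cylinders. In these coordinates one checks that the $\Gamma$-action is minimal and topologically free (using thickness of $\Delta$ and irreducibility of type $\widetilde A_2$), so that $\fk A_\Gamma = C^*(\Gamma\ltimes\Omega)$ is a unital, simple, purely infinite, nuclear $C^*$-algebra satisfying the UCT.

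The core of the argument is to encode the boundary dynamics by the two fundamental directions of a sector. A sector is filled by chambers indexed by $\bb Z_{\ge 0}^2$ along the fundamental coweights $\lambda_1,\lambda_2$, and extending a boundary germ one step in direction $\lambda_i$ admits exactly $q^2$ continuations, governed by the incidence geometry of the residue plane of order $q$ (the $q^2$ points off a given line, equivalently the $q^2$ lines off a given point). I would package this as two commuting shift maps on $\Omega$ together with a finite alphabet $\cl T$ of local configurations, and verify the defining conditions of a rank-$2$ Cuntz--Krieger system for the resulting data $(\cl T, M_1, M_2)$, yielding a groupoid isomorphism and hence $\fk A_\Gamma\cong\cl A(M_1,M_2)$.

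The main obstacle — and the point where chamber-regularity, as opposed to mere vertex-regularity, is essential — is to prove that this rank-$2$ system decouples. I would show that transitivity of $\Gamma$ on chambers forces a bijection $\cl T\cong F\times A\times B$ with $|A|=|B|=q^2$ and $|F|=3(q+1)$, under which $M_1$ is the full transition on the $A$-coordinate and $M_2$ the full transition on the $B$-coordinate, while the factor $F$ (three vertex types times the $q+1$ panel refinements) carries no dynamics. A full one-sided $q^2$-shift has $C^*$-algebra $\cl O_{q^2}$, a product of two such independent shifts yields $\cl O_{q^2}\otimes\cl O_{q^2}$, and the inert coordinate $F$ contributes the amplification $M_{3(q+1)}$, giving $\fk A_\Gamma\cong M_{3(q+1)}\otimes\cl O_{q^2}\otimes\cl O_{q^2}$. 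As a safeguard, since every algebra in sight is a unital Kirchberg algebra in the UCT class, the decoupling can be confirmed at the level of isomorphism by computing $K_*$ together with the class of the unit on both sides and invoking the Kirchberg--Phillips classification theorem.
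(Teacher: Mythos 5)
Your first half---realising $\fk A_\Gamma$ as a rank-$2$ Cuntz--Krieger system built from boundary cylinder sets, with an alphabet of local configurations and two transition matrices having $q^2$ continuations in each direction---is essentially the paper's route (carried out there with explicit partial isometries $S_{\pi_2,\pi_1}$ rather than groupoid language). The gap is your central ``decoupling'' step, which is both unproved and false. First, an inert alphabet coordinate cannot produce a matrix amplification: if no transition changes the $F$-coordinate, the transition graph is disconnected and the associated Cuntz--Krieger algebra is the direct sum $\bigoplus_{f\in F}\cl O_{q^2}\otimes\cl O_{q^2}$, which is not simple; this contradicts the simplicity of $\fk A_\Gamma$ that you yourself establish (minimality and topological freeness), and it violates the irreducibility condition (H2) needed for the Cuntz--Krieger machinery. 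Amplifications $M_r\otimes(\cdot)$ arise from the \emph{decorating set}---many based tiles at $O$ mapping to one alphabet letter, as in Lemma \ref{decorate*}---never from inert alphabet coordinates. Second, your candidate $F$ is not inert anyway: by Lemma \ref{01}, each $M_1$-transition advances the vertex type cyclically, so the ``three vertex types'' are permuted, not fixed, by the dynamics. Third, $3(q+1)$ is not the cardinality of any natural geometric object: in the paper it appears only as a residue modulo $q^2-1$, namely $3(q+1)\equiv k=(q+1)(q^2+q+1)$, the number of chambers at a vertex, and replacing $M_k$ by $M_{3(q+1)}$ is legitimate only because $M_{q^2}\otimes\cl A\cong\cl A$, which is itself a consequence of classification. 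Concretely, the paper's alphabet ($\Gamma$-orbits of tiles) has $3q^5$ elements, while $|F\times A\times B|=3(q+1)q^4\ne 3q^5$; you give no construction of an alternative alphabet $\cl T$, and the two directions are genuinely coupled through the projective-plane incidence in vertex links. The paper never asserts any combinatorial or groupoid-level product decomposition; the tensor factorization in the theorem exists only at the level of the classification invariant. (Even the $M_3$ factor coming from the three types is extracted K-theoretically in Lemma \ref{3times}, via Murray--von Neumann equivalence of the type projections $e_i$ forced by $[e_i]=q^2[e_{i+1}]$ in $\bb Z_{q^2-1}$, not by any inertness.)

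Consequently your ``safeguard''---compute $K_*$ and $[\1]$ and invoke Kirchberg--Phillips---is not a safeguard but the entire proof, and your proposal omits its substance. Since the decoupling fails, the invariants cannot be read off a product model; an independent computation is required, and that is precisely where the paper's work lies: the K-theory formula for rank-$2$ Cuntz--Krieger algebras in terms of $\coker\begin{smallmatrix}(I-M_1,&I-M_2)\end{smallmatrix}$ (Theorem \ref{Ki}); the reduction from tiles to hexagons giving the matrices $N_1,N_2$ (Lemma \ref{cokerN}); the chamber-regularity argument using (\ref{shift1}) and (\ref{shift2}) to show all hexagon classes of a fixed type coincide in the cokernel, so that $C(\Gamma)=\langle a_0 \mid a_0=q^2a_0\rangle\cong\bb Z_{q^2-1}$ (Proposition \ref{basicK}); and the determination of the unit class $[\1]=3(q+1)$ via decoration equivalences and counting modulo $q^2-1$ (Lemmas \ref{A0}, \ref{3times}, \ref{basic}). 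Without these computations, or a substitute for them, your argument shows only that $\fk A_\Gamma$ is a unital Kirchberg algebra in the UCT class presented by some rank-$2$ Cuntz--Krieger system, which does not identify its isomorphism class.
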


The theorem is proved by giving an explicit representation of $\fk A_\Gamma$ as a rank two Cuntz-Krieger algebra in the sense of \cite {rs2}. The K-theory of $\fk A_\Gamma$ is then determined explicitly as  $K_0(\fk A_\Gamma)\cong K_1(\fk A_\Gamma)) \cong \bb Z_{q^2-1}$
with the class $[\1]$ in $K_0(\fk A_\Gamma)$ corresponding to $3(q+1) \in \bb Z_{q^2-1}$. The classification theorem of E.~Kirchberg and N.C.~Phillips for p.i.s.u.n $C^*$ algebras \cite{ad} completes the proof.
It is notable that, for these groups, $C(\Omega) \rtimes \Gamma$ depends only on the order $q$ of the building and not on the group $\Gamma$. This is the first class of higher rank groups for which the boundary $C^*$-algebra has been computed exactly. There is a sharp contrast with the class of groups which act regularly on the vertex set of an $\widetilde A_2$ building, where the boundary $C^*$-algebra $\fk A_\Gamma$ frequently appears to determine the group $\Gamma$, according to explicit computations derived from \cite{rs3}. For example, the three torsion-free groups  $\Gamma< \PGL_3({\bb Q}_2)$ which act regularly on the vertex set of the corresponding building $\Delta$ are distinguished from each other by $K_0(\fk A_\Gamma)$. On the other hand, for $q=2$, there are four different groups satisfying the hypotheses of Theorem \ref{mainboundaryaction}, and hence having the same boundary $C^*$-algebra.

\section{Triangle geometries}

A locally finite euclidean building whose boundary at infinity is a spherical building of rank 2 is of type $\widetilde A_2$, $\widetilde B_2$ or $\widetilde G_2$. This article is concerned with buildings of type $\widetilde A_2$, also called triangle buildings. There is a close connection to projective geometry: the link of each vertex of an $\tilde A_2$ building $\Delta$ is the incidence graph of a finite projective plane of order $q$.
The maximal simplices of $\Delta$ are triangles, which are called {\em chambers}. Each vertex of $\Delta$ has a {\it type} $j \in \bb Z_3$, and each chamber has exactly one vertex of each type. Each edge of $\Delta$ lies on $q+1$ chambers. Orient each edge of $\Delta$ from its vertex of type $i$ to its vertex of type $i+1$.
\refstepcounter{picture}
\begin{figure}[htbp]
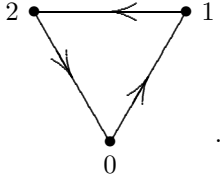
\label{chamber1}
\hfil
\centerline{
\beginpicture
\setcoordinatesystem units <1cm, 1.732cm>
\put {$\bullet$} at 0 0
\multiput {$\bullet$} at -1 1 *1 2 0 /
\put {$0$} [t] at 0 -0.1
\put {$2$} [r] at -1.2 1
\put {$1$} [l] at 1.2 1
\arrow <10pt> [.2, .67] from  0.2 1 to 0 1
\arrow <10pt> [.2, .67] from  -0.7 0.7 to -0.5 0.5
\arrow <10pt> [.2, .67] from  0.3 0.3 to 0.5 0.5
\putrule from -1 1 to 1 1
\setlinear \plot -1 1 0 0 1 1 /
\endpicture.
}
\hfil
\caption{A chamber showing vertex types and edge orientations.}
\end{figure}
In the link of the vertex $v$ of type $i$, the vertices of type $i+1$ [type $i+2$] correspond to the points [lines] of a projective plane
of order $q$. The chambers of $\Delta$ which contain $v$ correspond to incident point-line pairs. The number of such chambers is therefore
$(q+1)(q^2+q+1)$.

Suppose that $\Delta$ is a building of type $\widetilde A_2$ and that $\Gamma$ is a group of type preserving automorphisms of $\Delta$ which acts regularly (i.e. freely and transitively) on the chambers of $\Delta$. Then $\Delta /\Gamma$ is called a \textit{tight triangle geometry}. M. Ronan \cite[Theorem 3.3]{ro1} showed how to construct examples for all values of the prime power $q$. If $q=2$ then interesting examples arise from groups generated by three elements of order 3 such that any pair generate a Frobenius group of order 21 \cite{ro1, kmw1, kmw2}. These give rise to four non-isomorphic groups, the simplest of which has presentation
\begin{equation} \label{typeIgroup}
\big\langle s_i, i\in \bb Z_3 \, |\, s_i^3=1, s_is_{i+1}=(s_{i+1}s_i)^2 \big\rangle
\end{equation}
and acts on the euclidean building $\Delta$ of $\SL_3(\bb Q_2)$.
If $q=8$ there are 44 non-isomorphic groups of this type \cite {tit}.

From now on $\Delta$ will denote a building of type $\widetilde A_2$. Any two vertices~$u,v\in \Delta$ belong
to a common apartment. The convex hull, in the sense of buildings, between
two vertices $u$ and $v$ is illustrated in Figure~\ref{convex hulls}.
\refstepcounter{picture}
\begin{figure}[htbp]
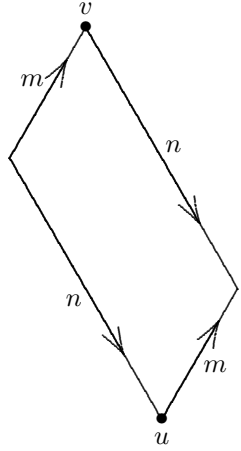
\label{convex hulls}
\hfil
\centerline{
\beginpicture
\setcoordinatesystem units <0.5cm,0.866cm>
\setplotarea x from -5 to 5, y from -2.5 to 4.5            
\put{$m$}  [tl]  at  2.1 -1.1
\put{$m$}  [br]  at -2.1  3.1
\put{$n$}  [tr]  at -1.1 -0.1
\put{$n$}  [bl]  at  1.1  2.1
\put{$\bullet$}  at  1   -2
\put{$u$}  [t]   at  1   -2.2
\put{$\bullet$}  at -1    4
\put{$v$}  [b]   at -1    4.2
\setlinear \plot 1 -2   3 0   -1 4   -3 2  1 -2 /
\arrow <10pt> [.3,.67] from   1    -2   to     2.5   -0.5
\arrow <10pt> [.3,.67] from  -3     2   to    -1.5   3.5
\arrow <10pt> [.3,.67] from  -1     4   to     2   1
\arrow <10pt> [.3,.67] from  -3     2   to     0  -1
\endpicture
}
\hfil
\caption{Convex hull of two vertices, with $\sigma (u,v) = (m,n)$.}
\end{figure}
Fundamental properties of buildings imply that any apartment containing~$u$
and~$v$ must also contain their convex hull.
Except in degenerate cases, the convex hull $\conv \{u, v\}$ is a parallelogram, with base vertex $u$.
Define the {\em distance}, $d(u,v)$, between vertices $u$ and $v$ to be the
graph theoretic distance on the one-skeleton of~$\Delta$. Any path from
$u$ to $v$ of length $d(u,v)$ lies in their convex hull, and the union
of the vertices in such paths is exactly the set of vertices in the
convex hull.

Define the {\em shape}~$\sigma(u,v)$ of an ordered pair of vertices
$(u,v)$ to be the pair $(m,n)\in\bb Z_+\times\bb Z_+$ as
indicated in Figure~\ref{convex hulls}.  Note that $d(u,v)=m+n$.  The arrows in Figure~\ref{convex hulls} point in the
direction of cyclically increasing vertex type, i.e.\
$\{\ldots,0,1,2,0,1,\ldots\}$.
If the parallelogram $\pi$ is the convex hull $\conv \{u, v\}$ of an {\em ordered} pair of vertices then the vertex $v_{ij}=v_{ij}(\pi)$ is the vertex of $\pi$ such that $\sigma(u, v_{ij})=(i,j)$. Parallelograms will always be parametrized in this way relative the initial vertex $u=v_{00}(\pi)$. Denote by $\Pi_{m,n}$ the set of parametrized parallelograms of shape $(m,n)$.

A \emph{sector} is a
$\frac{\pi}{3}$-angled sector made up of chambers in some apartment (Figure \ref{sector}).
Two sectors are  \emph{parallel} if their intersection contains a sector.
Parallelism is an equivalence relation and the (maximal) boundary $\Omega$ of $\Delta$ is defined to be the set of equivalence classes of sectors in $\Delta$.
For any vertex $v$  of $\Delta$ and any $\omega \in \Omega$ there is a unique sector $[v,\omega)$ in the
class $\omega$ having base vertex $v$ \cite[Section 9.3]{ron}.
Fix some vertex $O$ in $\Delta$.
The boundary of $\Delta$ is a totally disconnected compact Hausdorff space with a base for the
topology given by sets of the form
$$
\Omega(v) = \left \{ \omega \in \Omega : [O,\omega) \ \text{ contains } v \right \}
$$
where $v$ is a vertex of $\Delta$.

\refstepcounter{picture}
\begin{figure}[htbp]
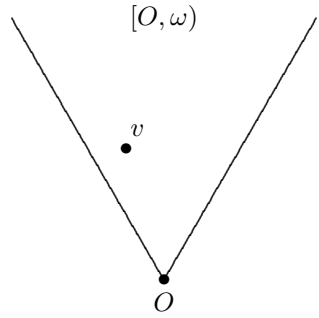
\label{sector}
\hfil
\centerline{
\beginpicture
\setcoordinatesystem units <0.5cm, 0.866cm>
\put {$\bullet$} at 0 0
\put {$\bullet$} at -1 2
\put {$[O,\omega)$}  at 0 4
\put {$O$} [t] at 0 -0.2
\put {$v$}  at  -0.7 2.3
\setlinear \plot -4 4  0 0  4 4 /
\endpicture
}
\hfil
\caption{The sector $[O,\omega)$, where $\omega\in \Omega(v)$.}
\end{figure}

A {\em tile} $\tau$ in $\Delta$ is defined to be the convex hull $\conv \{u, v\}$ of an ordered pair of vertices with shape $\sigma (u,v) = (2,2)$, as illustrated in Figure \ref{tile}. The initial vertex is $u=v_{00}(\tau)$ and the final vertex is $v=v_{22}(\tau)$.

\refstepcounter{picture}
\begin{figure}[htbp]
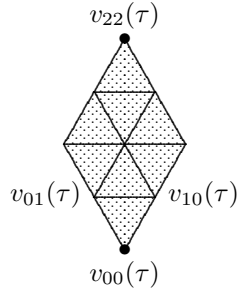
\label{tile}
\hfil
\centerline{
\beginpicture
\setcoordinatesystem units <0.4cm,0.7cm>  
\setplotarea x from -5 to 5, y from 0 to 4         
\setlinear \plot
 -1 1  0 0   2 2  0 4  -1 3  0 2  1 3 /
\plot -1 1  0 2  1 1  /
\plot  -1 1  -2 2  -1 3 /
\plot -1 1  1 1  / 
\plot  -2 2  2 2 /  
\plot -1 3  1 3  /   
\setshadegrid span <1.5pt>
\vshade  -2  2  2  <,z,,>  0  0  4  <z,,,>  2  2  2  /
\put{$\bullet$}  at  0   0
\put{$v_{00}(\tau)$}  [t]   at  0   -0.2
\put{$\bullet$}  at 0    4
\put{$v_{22}(\tau)$}  [b]   at 0    4.2
\put{$v_{10}(\tau)$}  [b]   at 2.6   0.8
\put{$v_{01}(\tau)$}  [b]   at -2.6  0.8
\endpicture
}
\hfil
\caption{A tile $\tau$ in an apartment.}
\end{figure}

Let $\pi$ be a parallelogram with shape $(m,n)$, where $m, n \ge 2$. The terminal tile $t(\pi)$ is the tile $\tau$ contained in the  parallelogram $\pi$ and containing its terminal vertex $v_{m,n}(\pi)$ (Figure \ref{parallelogram}).
Let
 $$\Omega(\pi)=\{ \omega\in \Omega\, : \, \pi\subset [v_{00}(\pi),\omega)\},$$
the set of boundary points whose representative sectors based at $v_{00}(\pi)$ contain $\pi$.

\refstepcounter{picture}
\begin{figure}[htbp]
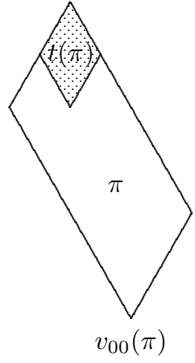
\label{parallelogram}
\hfil
\centerline{
\beginpicture
\setcoordinatesystem units <0.4cm,0.7cm>    
\setplotarea x from -6 to 6, y from -2 to 4         
\put{$v_{00}(\pi)$}  [t] at  1 -2.2
\put{$\pi$}  [t] at  0.5  0.6
\put{$t(\pi)$}  [r] at  -.2  3
\setlinear
\plot 1  -2    3  0   -1 4   -3 2   1 -2 /
\plot -2 3  -1 2   0 3 /
\setshadegrid span <1.5pt>
\vshade  -2  3  3  <,z,,>  -1  2  4  <z,,,>  0  3  3  /
\endpicture
}
\hfil
\caption{A parallelogram $\pi$ and its terminal tile $t(\pi)$.}
\end{figure}

\section{Algebras arising from boundary actions on $\widetilde A_2$ buildings}\label{A2boundary}

Let $\Delta$ be a building of type $\widetilde A_2$ and order $q$, with boundary $\Omega$.
Let $\Gamma$ be a group of type preserving automorphisms of $\Delta$ which acts regularly on the chambers of $\Delta$.
The group $\Gamma$ acts on $\Omega$, and one can form the crossed product $C^*$-algebra $C(\Omega) \rtimes \Gamma$.
This is the universal $C^*$-algebra  generated by the commutative $C^*$-algebra $C(\Omega)$ and the image of a unitary representation $\pi$ of $\Gamma$, satisfying the covariance relation
\begin{equation}\label{cov0}
f(\gamma^{-1}\omega) = \pi(\gamma)\cdot f \cdot \pi(\gamma)^{-1}(\omega)
\end{equation} for
$f \in C(\Omega)$, $\gamma \in \Gamma$ and $\omega\in\Omega$.
It is convenient to denote $\pi(\gamma)$ simply by $\gamma$.
We show that the crossed product algebra $\fk A_\Gamma = C(\Omega) \rtimes \Gamma$ is generated by an explicit set of partial isometries, described as follows. Fix a vertex $O$ with type $0$ in $\Delta$. Denote by $\Pi_O$ the set of all (parametrized) parallelograms in $\Delta$ with base vertex $O$. If $\pi_1, \pi_2 \in \Pi_O$ with $\Gamma t(\pi_1)= \Gamma t(\pi_2)$, then there is a  unique element $\gamma \in \Gamma$  such that $\gamma t(\pi_1)=t(\pi_2)$,
since $\Gamma$ acts freely on the chambers of $\Delta$. Let
\begin{equation}\label{Sab}
S_{\pi_2,\pi_1}=\gamma \bf 1 _{\Omega(\pi_1)}= \bf 1 _{\Omega(\pi_2)}\gamma.
\end{equation}
Then $S_{\pi_2,\pi_1}$ is a partial isometry with initial projection
$\bf 1 _{\Omega(\pi_1)}$
and final projection
$\bf 1 _{\Omega(\pi_2)}$.

\begin{lemma}\label{generate}
  The set of partial isometries
  $$\{S_{\pi_2,\pi_1} : \pi_1, \pi_2 \in \Pi_O, \Gamma t(\pi_1)= \Gamma t(\pi_2) \}$$
   generates $C(\Omega)\rtimes \Gamma$.
\end{lemma}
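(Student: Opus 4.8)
The plan is to establish the two containments that together prove the statement. Since the crossed product $C(\Omega)\rtimes\Gamma$ is, by its universal description, generated by $C(\Omega)$ together with the unitaries $\{\gamma:\gamma\in\Gamma\}$, and since each $S_{\pi_2,\pi_1}=\gamma\mathbf 1_{\Omega(\pi_1)}$ manifestly lies in $C(\Omega)\rtimes\Gamma$, it suffices to show that the $C^*$-algebra $\fk B$ generated by the partial isometries $S_{\pi_2,\pi_1}$ contains both $C(\Omega)$ and every $\gamma\in\Gamma$.

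First I would recover $C(\Omega)$. Taking $\pi_1=\pi_2=\pi$ forces the connecting element to be $e$, so $S_{\pi,\pi}=\mathbf 1_{\Omega(\pi)}\in\fk B$ for every $\pi\in\Pi_O$. Because $v_{00}(\pi)=O$, one has $\Omega(\pi)=\{\omega:v_{m,n}(\pi)\in[O,\omega)\}=\Omega\bigl(v_{m,n}(\pi)\bigr)$, so $\fk B$ contains $\mathbf 1_{\Omega(v)}$ for every vertex $v$ with $\sigma(O,v)=(m,n)$ and $m,n\ge 2$. These characteristic functions are self-adjoint; their sum over the parallelograms of a fixed shape $(m,n)$ based at $O$ equals $\mathbf 1_\Omega$, since each sector $[O,\omega)$ contains exactly one parallelogram of that shape based at $O$; and they separate points of $\Omega$, for if $\omega_1\neq\omega_2$ then $[O,\omega_1)$ and $[O,\omega_2)$ eventually diverge, whence $[O,\omega_1)$ contains a vertex $v$ of shape $(m,n)$ with $m,n\ge 2$ lying outside $[O,\omega_2)$. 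Stone--Weierstrass then yields $C(\Omega)\subseteq\fk B$.

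Second, and this is where the real work lies, I would recover each $\gamma\in\Gamma$. Using $\mathbf 1_\Omega=\sum_{\pi_1}\mathbf 1_{\Omega(\pi_1)}$ over the parallelograms $\pi_1\in\Pi_O$ of a fixed shape $(m,n)$, I write $\gamma=\sum_{\pi_1}\gamma\mathbf 1_{\Omega(\pi_1)}$ and identify each summand with an $S_{\pi_2,\pi_1}$. By the definition of $S_{\pi_2,\pi_1}$ this succeeds precisely when the tile $\gamma\,t(\pi_1)$ is the terminal tile of some $\pi_2\in\Pi_O$; equivalently, when $\gamma\,t(\pi_1)$ is well positioned relative to $O$. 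The geometric key is that $\gamma[O,\omega)=[\gamma O,\gamma\omega)$ is parallel to $[O,\gamma\omega)$, so by the definition of parallelism these two sectors share a common subsector, and the vertex at which they merge lies within a distance of the apexes controlled by $d(O,\gamma O)$, uniformly in the direction $\omega$. Consequently, once $\min(m,n)$ exceeds a threshold depending only on the fixed element $\gamma$, the deep tile $\gamma\,t(\pi_1)$ already lies inside $[O,\gamma\omega)$ and is exactly the terminal tile of $\pi_2:=\conv\{O,\gamma v_{m,n}(\pi_1)\}\in\Pi_O$. For such a shape we have $\Gamma t(\pi_1)=\Gamma t(\pi_2)$ and $S_{\pi_2,\pi_1}=\gamma\mathbf 1_{\Omega(\pi_1)}$ for every $\pi_1$, so $\gamma=\sum_{\pi_1}S_{\pi_2,\pi_1}\in\fk B$. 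Combining the two containments gives $C(\Omega)\rtimes\Gamma\subseteq\fk B$, and the reverse inclusion is immediate.

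The main obstacle is the last geometric step: verifying that for a single sufficiently large shape the translate $\gamma\,t(\pi_1)$ is simultaneously well positioned with respect to $O$ for all (finitely many) $\pi_1$ of that shape. This rests on the uniform merging of parallel sectors issuing from the bounded-distance pair $O,\gamma O$, together with the fact that a tile lying deep inside a sector with apex $O$ is determined by, and is the terminal tile of the parallelogram spanned by $O$ and, its final vertex. Once this is in hand the remaining verifications are routine.
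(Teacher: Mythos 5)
Your proposal is correct and follows essentially the same route as the paper: the same decomposition $\gamma=\sum_{\pi_1}\gamma\mathbf{1}_{\Omega(\pi_1)}$ over all parallelograms of one fixed, sufficiently large shape, with the same companion parallelogram $\pi_2=\conv\{O,\gamma v_{m,n}(\pi_1)\}$ (the paper's $\pi'=\gamma\conv\{\gamma^{-1}O,x\}$). The only difference is in justification, not strategy: where you sketch a sector-merging argument for the key fact that a bounded change of base point does not alter the terminal tile of a long enough parallelogram, the paper simply invokes \cite[Lemma 7.5]{rs2}, with the explicit threshold $m_1,m_2\ge d(O,\gamma^{-1}O)$.
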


\begin{proof}
Let $\fk S$ denote the $C^*$-subalgebra of $\fk A_\Gamma$ generated by $\{S_{\pi_2,\pi_1} : \pi_1, \pi_2 \in \Pi_O, \Gamma t(\pi_1)= \Gamma t(\pi_2) \}$.
  Since the sets of the form $\Omega(\pi)$, $\pi\in\Pi_O$, form a basis for the topology of $\Omega$, the linear span of the $\Omega (\pi)$ is dense in $C(\Omega)$.
It follows that  $\fk S$  contains $C(\Omega)$.
To show that $\fk S$ contains $\fk A_\Gamma$, it suffices to show that it contains $\Gamma$.

Fix $\gamma\in\Gamma$. Choose $m_1,m_2 \ge 2$ such that $d(O,\gamma^{-1}O)\le m_1,m_2$.
Note that
\begin{equation}\label{sump}
\gamma=\sum_\pi \gamma{\bf 1}_{\Omega(\pi)}
\end{equation}
where the sum is over all parallelograms $\pi$ based at $O$ with $\sigma(\pi)=(m_1,m_2)$.
Fix such a parallelogram $\pi$. Let $\pi''=\conv\{\gamma^{-1}O,x\}$.
Since $\sigma(t(\pi))=(2,2)$, it follows from \cite [Lemma 7.5]{rs2} that $t(\pi'')=t(\pi)$.

\refstepcounter{picture}
\begin{figure}[htbp]
\hfil
\centerline{
\beginpicture
\setcoordinatesystem units <0.5cm,0.866cm>    
\setplotarea x from -6 to 6, y from -2.2 to 4.3         
\put{$O$}  [t] at  1  -2.2
\put{$x$}  [l] at  -0.8  4.2
\put{$\gamma^{-1}O$}  [t] at  -2.5  -1.7
\put{$t(\pi'')=t(\pi)$}  [r] at  -1.6 3.6
\setlinear
\plot 1  -2    3  0   -1  4     -3 2   1 -2   /
\plot -3 2    -4.5 0.5   -2.5 -1.5    1 2   /
\plot -1.5 3.5  -1 3  -0.5 3.5 /
\setshadegrid span <1.5pt>
\vshade  -1.5  3.5  3.5  <,z,,>  -1  3  4  <z,,,>  -0.5 3.5 3.5  /
\endpicture
}
\hfil
\caption{}
\end{figure}

Let $\pi'=\gamma\pi''$. Then $\pi'=\conv\{O,\gamma x\}$ and $t(\pi')=\gamma t(\pi)$.
Therefore $\gamma{\bf 1}_{\Omega(\pi)}=S_{\pi,\pi'} \in \fk A_\Gamma$.
The result follows from equation (\ref{sump}).
\end{proof}

\section{Cuntz-Krieger algebras of rank 2}\label{appendix}

This section recalls the principal facts about these algebras from \cite{rs2}. Fix a finite set $A$ which we refer to as an ``alphabet''.
A \emph{$\{0,1\}$-matrix} is a matrix with entries in $\{0,1\}$.
Choose nonzero  $\{0,1\}$-matrices  $M_1, M_2$ and denote their elements by
 $M_j(b,a) \in \{0,1\}$ for $a,b \in A$.

If $(m,n)\in \bb Z_+\times \bb Z_+$, a word of \textit{shape} $\sigma(w)=(m,n)$ in the alphabet $A$
is a mapping $(i,j)\mapsto w_{i,j}$ from $\{0, 1, \dots , m\} \times \{0, 1, \dots , n\}$ to $A$ such that
$M_1(w_{i+1,j}, w_{i,j}) = 1$ and $M_2(w_{i,j+1}, w_{i,j}) = 1$.
Denote by $W_{m,n}$ the set of words $w$ of shape $(m,n)$.
The set $W_{0,0}$ is identified with $A$.

Define initial and final maps $o: W_{m,n} \to A$ and $t: W_{m,n} \to A$ by
$o(w) = w_{0,0}$ and $t(w) = w_{m,n}$.

\refstepcounter{picture}
\begin{figure}[htbp]
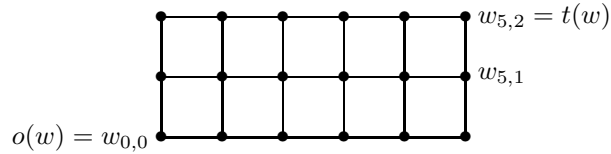
\label{word}
\hfil
\centerline{
\beginpicture
\setcoordinatesystem units <0.8cm, 0.8cm>
\setplotarea  x from -6 to 6,  y from -1 to 1.2
\putrule from -3 -1 to 2 -1
\putrule from -3 0 to 2 0
\putrule from -3 1  to 2  1
\putrule from  -3 -1  to -3 1
\putrule from  -2 -1  to -2 1
\putrule from  -1 -1  to -1 1
\putrule from  0 -1  to -0 1
\putrule from  1 -1  to 1 1
\putrule from  2 -1  to 2 1
\put {$w_{5,2}=t(w)$} [l]     at   2.2 1
\put {$w_{5,1}$} [l]     at   2.2 0
\put {$o(w)=w_{0,0}$} [r]     at   -3.2 -1
\multiput {$\bullet$} at -3 -1 *5  1 0 /
\multiput {$\bullet$} at -3 0 *5  1 0 /
\multiput {$\bullet$} at -3 1 *5  1 0 /
\endpicture
}
\hfil
\caption{Representation of a two dimensional word of shape $(5,2)$.}
\end{figure}

Fix a nonempty finite or countable set $D$ (whose elements are \textit{decorations}),
and a map $\delta : D \to A$.
Let $\overline W_{m,n} = \{ (d,w) \in D \times W_{m,n} : \ o(w) = \delta (d) \}$, the set of
\textit{decorated words} of shape ${m,n}$, and identify
$D$ with $\overline W_0$ via the map $d \mapsto (d,\delta(d))$.
Let $W = \bigcup_{m,n} W_{m,n}$ and  $\overline W = \bigcup_{m,n} \overline W_{m,n}$, the sets of all
words and all decorated words respectively.
Define $o: \overline W_{m,n} \to D$ and $t: \overline W_{m,n} \to A$ by
$o(d,w) = d$ and $t(d,w) = t(w)$. Likewise extend the definition of shape to $\overline W$ by setting
$\sigma((d,w))=\sigma(w)$.

If $M_1M_2=M_2M_1$ and $M_1M_2$ is a $\{0,1\}$-matrix then by \cite[Lemma 1.4]{rs3} the following condition holds.

\begin{description}
\item[(H1)]\label{H1} Let $u\in W_{m,n}$ and $v \in W_{r,s}$. If $t(u) =o(v)$ then there exists a unique
word $w\in W_{m+r, n+s}$ such that
\begin{eqnarray*}
  w_{i,j} &=& u_{i,j} \qquad  \qquad   0\le i \le m, 0 \le j \le n; \\
  w_{i,j} &=& v_{m+i,n+j} \qquad  0\le i \le r, 0 \le j \le s.
\end{eqnarray*}
\end{description}

\noindent  We write $w=uv$ and say that the product $uv$ exists.

\refstepcounter{picture}
\begin{figure}[htbp]
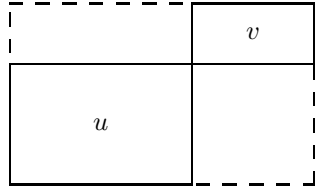
\label{productdiagram}
\centerline{
\beginpicture
\setcoordinatesystem units <0.8cm, 0.8cm>
\setplotarea  x from 0 to 5,  y from 0 to 3
\putrectangle corners at 0 0 and 3 2
\putrectangle corners at 3 2 and 5 3
\setdashes
\putrectangle corners at 0 0 and 5 3
\put{$u$} at 1.5  1
\put{$v$} at 4  2.5
\endpicture
}
\caption{The word  $uv$}
\end{figure}

\noindent Assume that condition \textbf{(H1)} is satisfied, as well as the following two conditions.

\begin{description}
\item[(H2)] Consider the directed graph which has a vertex for each $a \in A$
and a directed edge from $a$ to $b$ for each $i$ such that $M_i(b,a) =1$.
This graph is irreducible.

\item[(H3)] If $(p_1,p_2)\in \bb Z^2 -\{(0,0)\}$, then there exists some  $w \in W$ which is not $(p_1,p_2)$-periodic, in the sense of \cite{rs2}.
\end{description}

\noindent Under the above assumptions, define a $C^*$-algebra, which depends on $D$, as follows.

\begin{definition}\label{CKdefinition}
The $C^*$-algebra $\cl A_D$ is the universal $C^*$-algebra
generated by a family of partial isometries
$\{s_{u,v} : \ u,v \in \overline W \ \text{and} \ t(u) = t(v) \}$
satisfying the relations
\begin{subequations}\label{rel1*}
\begin{eqnarray}
{s_{u,v}}^* &=& s_{v,u} \label{rel1a*}\\
s_{u,v}s_{v,w}&=&s_{u,w} \label{rel1b*}\\
s_{u,v}&=&\displaystyle\sum_
{\substack{w\in W;\sigma(w)=\varepsilon,\\
o(w)=t(u)=t(v)}}
s_{uw,vw} ,\ \text{for}\quad  \varepsilon\in\{(1,0), (0,1)\}
\label{rel1c*}\\
s_{u,u}s_{v,v}&=&0 ,\ \text{for} \ u,v \in \overline W_0, u \ne v. \label{rel1d*}
\end{eqnarray}
\end{subequations}
\end{definition}

\begin{theorem}\label{mainck} \cite{rs2} The $C^*$-algebra $\cl A_D$ is purely infinite, simple and nuclear. Any nontrivial $C^*$-algebra with
generators $S_{u,v}$ satisfying relations (\ref{rel1*}) is isomorphic to $\cl A_D$.
\end{theorem}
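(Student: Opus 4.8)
The plan is to run the higher-rank analogue of the Cuntz--Krieger program, whose backbone is a gauge action of $\bb T^2$ together with a careful analysis of its fixed-point algebra. First I would introduce the gauge action $\alpha\colon\bb T^2\to\aut(\cl A_D)$ determined on generators by $\alpha_z(s_{u,v})=z^{\sigma(u)-\sigma(v)}s_{u,v}$, where $z^{(k,\ell)}=z_1^k z_2^\ell$ for $z=(z_1,z_2)\in\bb T^2$. The relations (\ref{rel1*}) are homogeneous under this rule, so $\alpha$ is well defined, and averaging gives a \emph{faithful} conditional expectation $\Phi(a)=\int_{\bb T^2}\alpha_z(a)\,dz$ of $\cl A_D$ onto the fixed-point algebra $\mathcal F=\cl A_D^{\,\alpha}$; faithfulness is automatic for averaging over a compact group. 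The key structural fact is that $\mathcal F$ is an AF algebra: it is the closure of the increasing union of the finite-dimensional algebras spanned by $\{s_{u,v}:u,v\in\overline W_{m,n},\ t(u)=t(v)\}$, in which (\ref{rel1b*}) supplies the matrix-unit relations while (\ref{rel1c*}), together with the factorisation property \textbf{(H1)}, identifies the two embeddings $\mathcal F_{m,n}\hookrightarrow\mathcal F_{m+1,n}$ and $\mathcal F_{m,n}\hookrightarrow\mathcal F_{m,n+1}$ as the expected block-diagonal inclusions.

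Nuclearity then follows because $\mathcal F$ is AF, hence nuclear, and $\cl A_D$ is $\bb Z^2$-graded over $\mathcal F$ with the faithful conditional expectation $\Phi$; since $\bb Z^2$ is amenable, the associated Fell bundle is amenable and $\cl A_D$ is the nuclear cross-sectional algebra of a Fell bundle with AF unit fibre $\mathcal F$. For simplicity I would show that every nonzero ideal $J$ meets the core: given $0\ne a\in J_+$, an approximation argument using $\Phi$ produces a nonzero element of $J\cap\mathcal F$, and hence, after compressing by a suitable matrix unit, a nonzero multiple of some vacuum projection $s_{d,d}$ with $d\in\overline W_0$. Irreducibility of the transition graph \textbf{(H2)} then moves $s_{d,d}$ onto every other $s_{d',d'}$, so $J$ contains all the vacuum projections and therefore an approximate unit, giving $J=\cl A_D$. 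Pure infiniteness is obtained by noting that relation (\ref{rel1c*}) exhibits each $s_{d,d}$ as dominating infinitely many mutually orthogonal equivalent subprojections, so every vacuum projection is properly infinite; simplicity then spreads proper infiniteness to all nonzero projections. Finally, the uniqueness statement is immediate: $\cl A_D$ is universal by Definition~\ref{CKdefinition}, so any nontrivial $C^*$-algebra generated by partial isometries satisfying (\ref{rel1*}) receives a nonzero---hence, by simplicity, injective---surjection from $\cl A_D$.

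The hardest point is the step inside simplicity where one must guarantee that the core $\mathcal F$, or rather the abelian diagonal it contains, genuinely detects ideals. This is exactly where the aperiodicity hypothesis \textbf{(H3)} is indispensable: it is the rank-two replacement for the Cuntz--Krieger condition (I), ruling out the periodic configurations of two-dimensional words that would otherwise allow an ideal to avoid the vacuum projections and produce a nontrivial bundle of proper quotients. Establishing, from \textbf{(H3)}, that no such periodic obstruction survives---so that the compression argument really lands on a nonzero $s_{d,d}$---is the main technical obstacle, and it is here that the detailed combinatorics of $W$ enters.
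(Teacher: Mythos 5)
The paper itself contains no proof of this statement: Theorem~\ref{mainck} is imported verbatim from \cite{rs2}, so your sketch has to be measured against the proof given there. Measured that way, your route is essentially the same one: \cite{rs2} also uses the gauge action of $\bb T^2$ coming from the shape grading, the faithful conditional expectation onto the fixed-point algebra, the identification of that core as an AF algebra built from the pieces spanned by $\{s_{u,v}: u,v\in\overline W_{m,n},\ t(u)=t(v)\}$, irreducibility \textbf{(H2)} to move among the projections $s_{d,d}$, aperiodicity \textbf{(H3)} to make the core detect ideals, and universality plus simplicity for the uniqueness statement. Your Fell-bundle argument for nuclearity is a cosmetic variant of theirs, which applies Takai duality to the gauge action to realize $\cl A_D$ up to stable isomorphism as a crossed product of an AF algebra by $\bb Z^2$; both versions rest on amenability of $\bb Z^2$ and nuclearity of the AF fibre, and the crossed-product form is what also yields the UCT that this paper invokes later.

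That said, two things keep this from being a proof rather than a plan. First, the step you explicitly defer is the entire mathematical content: from $0\ne a\in J_+$ one cannot conclude $\Phi(a)\in J$, and closing that gap --- constructing, for an element of the dense $*$-subalgebra, diagonal projections that annihilate its finitely many off-diagonal Fourier components while retaining most of the norm of $\Phi(a)$, so that a nonzero element of $J\cap\cl F$, and then by compression a full vacuum projection, actually lands in $J$ --- is exactly where \textbf{(H3)} is used in \cite{rs2} and where all the combinatorics of two-dimensional words lives; simplicity, pure infiniteness and uniqueness all sit on top of this estimate. Second, nothing in your argument shows $\cl A_D\ne 0$, i.e.\ that the universal algebra admits a representation in which the generators are nonzero. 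Without that, ``purely infinite and simple'' is vacuous and the uniqueness statement empty, since a priori relations (\ref{rel1*}) could force total collapse; \cite{rs2} rules this out by exhibiting a concrete representation on $\ell^2$ of the set of decorated infinite words --- the combinatorial model of the boundary that this paper exploits --- and a self-contained proof must include such a construction. A smaller point: proper infiniteness of every nonzero projection together with simplicity is not the definition of pure infiniteness; one needs an infinite projection in every nonzero hereditary subalgebra, and that again comes from the \textbf{(H3)} compression argument, not formally from simplicity.
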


\begin{remark}\label{decorate}
Two decorations $\delta_1 : D_1 \to A$ and $\delta_2 : D_2 \to A$ are said to be {\it equivalent} \cite[Section 5]{rs1} if there is a bijection $\eta: D_1 \to D_2$ such that $\delta_1 = \delta_2 \eta$. Equivalent decorations $\delta_1, \delta_2$ give rise to isomorphic algebras $\cl A_{D_1}, \cl A_{D_2}$.
\end{remark}

\begin{lemma}\label{decorate*}
Given a decoration $\delta : D \to A$, and $r\in \bb N$, define another decoration $\delta' : D \times \{1,2,\dots ,r\} : \to A$
by $\delta'((d,i)) = \delta(d)$. The $C^*$-algebra $\cl A_{D \times \{1,2,\dots ,r\}}$ is isomorphic to $M_r \otimes \cl A_D$.
\end{lemma}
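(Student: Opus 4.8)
The plan is to exhibit an explicit isomorphism $\cl A_{D \times \{1,\dots,r\}} \cong M_r \otimes \cl A_D$ by constructing the generating partial isometries of each side from those of the other. The key observation is that the decoration set is enlarged by a factor of $r$ while the decoration map $\delta'$ simply forgets the second coordinate, so the underlying word combinatorics (the matrices $M_1, M_2$, the alphabet $A$, and hence all products and shapes) are unchanged. A decorated word over $D \times \{1,\dots,r\}$ is a pair $((d,i), w)$ with $o(w) = \delta'((d,i)) = \delta(d)$, which is exactly a decorated word $(d,w) \in \overline W$ over $D$ together with an extra index $i \in \{1,\dots,r\}$. I will write such a word as $(d,w;i)$, and I note that $t((d,w;i)) = t(w)$ depends only on $w$, \emph{not} on $i$.

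First I would set up the $r \times r$ matrix units. Fix any base decorated word; more efficiently, for each pair $i,j \in \{1,\dots,r\}$ and each decorated word $(d,w) \in \overline W$ define
\begin{equation}\label{eq:matunit}
e_{ij} \otimes s_{(d,w),(d',w')} \;\longleftrightarrow\; s_{(d,w;i),(d',w';j)},
\end{equation}
where on the left $e_{ij}$ are the standard matrix units of $M_r$ and $s_{(d,w),(d',w')}$ are the generators of $\cl A_D$. The map from $\cl A_{D\times\{1,\dots,r\}}$ to $M_r \otimes \cl A_D$ sending $s_{(d,w;i),(d',w';j)}$ to the right-hand generator $e_{ij}\otimes s_{(d,w),(d',w')}$ is the candidate isomorphism, and I would verify it is a well-defined $*$-homomorphism by checking that the four defining relations (\ref{rel1a*})--(\ref{rel1d*}) are respected. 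Relations (\ref{rel1a*}) and (\ref{rel1b*}) translate directly into $(e_{ij})^* = e_{ji}$ and $e_{ij}e_{jk} = e_{ik}$ combined with the corresponding relations in $\cl A_D$; relation (\ref{rel1c*}) holds because the sum over $w$ of shape $\e$ leaves both indices $i,j$ untouched, so it is carried entirely inside the $\cl A_D$ tensor factor; and the orthogonality relation (\ref{rel1d*}) for distinct $u,v \in \overline W_0$ over $D\times\{1,\dots,r\}$ corresponds to $(e_{ii}\otimes p)(e_{jj}\otimes p') = \delta_{ij}\, e_{ii}\otimes pp'$, which vanishes precisely when the two decorated words differ either in their $D$-component (giving $pp'=0$ in $\cl A_D$) or in their index (giving $e_{ii}e_{jj}=0$).

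By the universal property in Definition~\ref{CKdefinition}, relation-preservation yields a $*$-homomorphism $\Phi : \cl A_{D\times\{1,\dots,r\}} \to M_r \otimes \cl A_D$, and I would similarly use the universal property of $\cl A_D$, together with the standard presentation of $M_r$ by its matrix units, to build an inverse; alternatively I would observe that the images under $\Phi$ of the generators $s_{(d,w;i),(d',w';j)}$ clearly span the elements $e_{ij}\otimes s_{(d,w),(d',w')}$, which generate $M_r\otimes\cl A_D$, so $\Phi$ is surjective. Injectivity is then immediate from \thmref{mainck}: the source algebra $\cl A_{D\times\{1,\dots,r\}}$ is simple, so the nonzero homomorphism $\Phi$ has trivial kernel. \textbf{The main obstacle} is really just bookkeeping: one must confirm that the hypotheses \textbf{(H1)}--\textbf{(H3)} and in particular the aperiodicity condition \textbf{(H3)} are inherited by the decorated structure over $D\times\{1,\dots,r\}$, so that $\cl A_{D\times\{1,\dots,r\}}$ is genuinely the simple algebra to which \thmref{mainck} applies. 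Since $\delta'$ factors through $\delta$ and the word combinatorics are identical, a word over the enlarged decoration set is $(p_1,p_2)$-periodic if and only if the underlying word over $D$ is, so \textbf{(H3)} transfers verbatim and there is no serious difficulty; the enlargement affects only the matrix-unit factor $M_r$.
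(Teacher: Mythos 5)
Your proposal is correct and is essentially the paper's own proof: the paper defines exactly the same map on generators, $s_{((d,i),u),((d',j),v)} \mapsto s_{(d,u),(d',v)}\otimes e_{ij}$, and delegates the fact that it is an isomorphism to a citation of \cite[Corollary 5.10]{rs2}, which is the uniqueness/simplicity statement you invoke via \thmref{mainck}. Your extra worry about condition \textbf{(H3)} is vacuous (the conditions (H1)--(H3) concern only the alphabet $A$ and matrices $M_1,M_2$, which are untouched by changing the decoration), but this does not affect the argument.
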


\begin{proof}
(c.f. \cite [Lemma 5.12]{rs2}.)
If $u,v \in W$, the isomorphism is given by
$$s_{((d,i),u),((d',j),v)} \mapsto s_{(d,u),(d',v)}\otimes e_{ij},$$
where the $e_{ij}$ are matrix units for $M_r$. The fact that this
is an isomorphism follows from \cite[Corollary 5.10]{rs2}.
\end{proof}

\section{$\fk A_\Gamma = C(\Omega)\rtimes \Gamma$ as a rank 2 Cuntz-Krieger algebra}

We now show how the set of partial isometries $\{S_{\pi_2,\pi_1} : \pi_1, \pi_2 \in \Pi_O, \Gamma t(\pi_1)= \Gamma t(\pi_2) \}$ defined by (\ref{Sab}) allows one to express
$\fk A_\Gamma$ as a rank 2 Cuntz-Krieger algebra.
The alphabet $A$ is the set of $\Gamma$-orbits of tiles.
That is $A=\{\Gamma\tau:{\tau}\in \Pi_{2,2}\}$.  Since $\Gamma$ is type preserving,
$A=A_0 \sqcup A_1 \sqcup A_2$, where $A_i$ denotes the set of $\Gamma\tau$ such that $\tau$ has base vertex of type $i$.
The decorating set $D$ is the set of tiles $\tau$ with fixed base vertex $O$, and the decorating map
$\delta : D \to A$ is defined by $\delta(\tau)=\Gamma\tau$.
From now on, $D$ will always denote this particular decorating set.

The matrices $M_1$, $M_2$ with entries in $\{0,1\}$
are defined as follows.
If $a,b \in A$, say that $M_1(b,a)=1$ if and only if
$a=\Gamma \tau_1$, $b = \Gamma \tau_2$, for some tiles $\tau_1, \tau_2$, where the union of the representative
tiles $\tau_1 \cup \tau_2$ is a parallelogram of shape $(3,2)$ as illustrated
on the right of Figure \ref{transition}, with $\tau_1$ shaded.
The definition of $M_2$ is illustrated on the left of Figure \ref{transition},
where $a'=\Gamma \tau_1'$, $b' = \Gamma \tau_2'$ and $\tau_1' \cup \tau_2'$ is a parallelogram of shape $(2,3)$.

\refstepcounter{picture}
\begin{figure}[htbp]
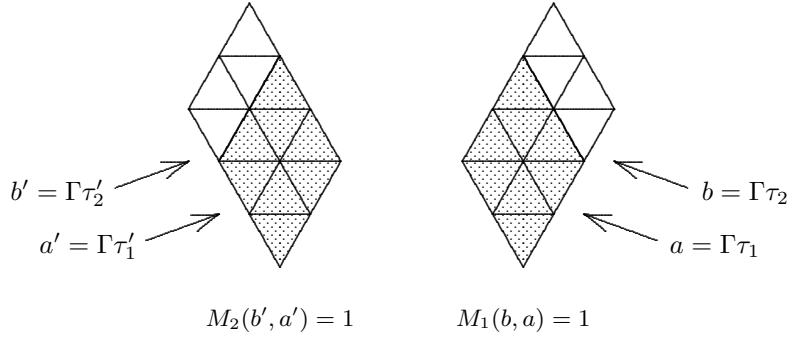
\label{transition}
\hfil
\centerline{
\beginpicture
\setcoordinatesystem units <0.4cm,0.7cm>  point at -4 0 
\setplotarea x from -5 to 5, y from 0 to 5         
\setlinear \plot
 -1 1  0 0   2 2  0 4  -1 3  0 2  1 3 /
\plot -1 1  0 2  1 1  /
\plot  -1 1  -2 2  -1 3 /
\plot -1 1  1 1  / 
\plot  -2 2  2 2 /  
\plot -1 3  1 3  /   
\plot 0 4   1 5  2 4  0 4  1 3  2 4  3 3  1 3  2 2  3 3 /  
\setshadegrid span <1.5pt>
\vshade  -2  2  2  <,z,,>  0  0  4  <z,,,>  2  2  2  /
\arrow <10pt> [.3,.67] from   5.4  1.5   to     3  2
\put{$b=\Gamma\tau_2$}  at 7.3  1.4
\arrow <10pt> [.3,.67] from   4.4  0.5   to     2  1
\put{$a=\Gamma\tau_1$}  at 6.3  0.4
\put{{\small $M_1(b,a)=1$}}   at  0 -1
\setcoordinatesystem units <0.4cm,0.7cm>  point at 4 0 
\setplotarea x from -5 to 5, y from 0 to 5         
\setlinear \plot
 -1 1  0 0   2 2  0 4  -1 3  0 2  1 3 /
\plot -1 1  0 2  1 1  /
\plot  -1 1  -2 2  -1 3 /
\plot -1 1  1 1  / 
\plot  -2 2  2 2 /  
\plot -1 3  1 3  /   
\plot 0 4   -1 5  -2 4  0 4  -1 3  -2 4  -3 3  -1 3  -2 2  -3 3 /  
\setshadegrid span <1.5pt>
\vshade  -2  2  2  <,z,,>  0  0  4  <z,,,>  2  2  2  /
\arrow <10pt> [.3,.67] from   -5.4  1.5   to     -3  2
\put{$b'=\Gamma\tau_2'$}  at -7.3  1.4
\arrow <10pt> [.3,.67] from   -4.4  0.5   to     -2  1
\put{$a'=\Gamma\tau_1'$}  at -6.3  0.4
\put{{\small $M_2(b',a')=1$}}   at  0 -1
\endpicture
}
\hfil
\caption{Transition matrices.}
\end{figure}

\begin{proposition}\label{M_1M_2} The matrices
$M_1$, $M_2$ satisfy conditions {\rm(H1)},{\rm(H2)}, and {\rm(H3)} of Section \ref{appendix}.
\end{proposition}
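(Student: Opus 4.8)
The plan is to organize everything around the dictionary between words in the alphabet $A$ and $\Gamma$-orbits of parallelograms in $\Delta$. A parallelogram $\pi$ of shape $(m+2,n+2)$ based at a type-$0$ vertex gives a word of shape $(m,n)$ via $w_{i,j}=\Gamma\, t(\conv\{v_{i,j}(\pi),v_{i+2,j+2}(\pi)\})$, and the transition rules $M_1(w_{i+1,j},w_{i,j})=1$, $M_2(w_{i,j+1},w_{i,j})=1$ are precisely the overlap conditions of Figure~\ref{transition}. Under this dictionary each of (H1), (H2), (H3) becomes a statement about how tiles of shape $(2,2)$ fit together inside parallelograms, and the single tool used repeatedly is the freeness of the $\Gamma$-action on chambers.

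First I would dispose of (H1). By \cite[Lemma 1.4]{rs3} it suffices to show that $M_1M_2=M_2M_1$ and that $M_1M_2$ is a $\{0,1\}$-matrix. Both products describe the four sub-tiles of one $(3,3)$-parallelogram $P=\conv\{O,v\}$: the entry $(c,a)$ of $M_1M_2$ counts the top-left sub-tile orbits $b$ of such a $P$ whose bottom-left sub-tile lies in $a$ and whose terminal sub-tile lies in $c$, while $M_2M_1$ counts the bottom-right sub-tiles of the same $P$. Fixing a representative $\alpha\in a$ with base vertex $O$, I would note that the three tiles witnessing $M_2(b,a)=1$ and $M_1(c,b)=1$ cover every vertex of $\conv\{O,v\}$ except the forced corner $v_{3,0}$, so they genuinely assemble into a $(3,3)$-parallelogram (this is the content of \cite[Lemma~7.5]{rs2}). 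The rigidity step is then: the terminal sub-tile $\conv\{v_{1,1},v_{3,3}\}$ shares with $\alpha$ the fixed base cell $R=\conv\{v_{1,1},v_{2,2}\}$, and two tiles with a common base cell lying in the same $\Gamma$-orbit must coincide, since an element fixing the rhombus $R$ fixes a chamber and hence is the identity. Thus $v_{3,3}$, and with it all of $P$ together with its intermediate sub-tiles, is determined modulo $\Gamma$ by the pair $(a,c)$; every entry of $M_1M_2$ and $M_2M_1$ is therefore $0$ or $1$, both equal to the indicator that such a $P$ exists, and the two matrices coincide.

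For (H2) I would prove strong connectivity of the directed graph on $A$. Locally, the admissible one-step extensions of a tile are governed by the point-line incidence of the finite projective plane of order $q$ in the relevant vertex links, whose incidence graph is connected; and since $\Gamma$ is type preserving, moving one step in either parallelogram direction shifts the base type cyclically, so iterating the transitions reaches each of $A_0,A_1,A_2$. Combining this with the fact that any two vertices of $\Delta$ lie in a common apartment, so that $\alpha$ can be joined to a suitably placed translate of any given tile by a parallelogram, I would show that for every ordered pair $(a,c)$ there is a word $w$ with $o(w)=a$ and $t(w)=c$; any monotone lattice path through $w$ is then a directed path $a\to\dots\to c$, giving (H2). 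Finally (H3) asks only for one non-$(p_1,p_2)$-periodic word for each $(p_1,p_2)\neq(0,0)$: because $q\ge 2$ the relevant type class contains at least two $\Gamma$-orbits of tiles, so the realizability supplied by the (H2) argument lets me build a parallelogram whose two tiles at relative position $(p_1,p_2)$ lie in distinct orbits, and its word fails $(p_1,p_2)$-periodicity.

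The main obstacle is (H2). The rigidity behind (H1) and the existence assertion (H3) both fall out once freeness on chambers and the bound $q\ge 2$ are in hand, but upgrading the local connectivity of the link incidence graph to global strong connectivity of the tile-orbit graph — that every ordered pair of compatible tile-orbits is realized as the $(\text{base},\text{terminal})$ pair of some parallelogram — is the step demanding the most care, since it must simultaneously control both parallelogram directions and the cycling of vertex types.
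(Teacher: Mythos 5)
Your treatment of (H1) is essentially sound and coincides in spirit with what the paper does (the paper delegates it, together with (H3), to a minor modification of \cite[Proposition 7.9]{rs2}): freeness of the $\Gamma$-action on chambers gives exactly the rigidity you describe, since the element carrying one candidate tile to another fixes the common base rhombus pointwise, hence fixes a chamber, hence is trivial. The genuine gap is (H2) --- precisely the part of the proposition for which the paper must supply a \emph{new} argument --- and you have in effect flagged it yourself by calling it ``the main obstacle'' without resolving it. The tools you propose do not deliver it. Connectivity of the point--line incidence graph in a vertex link is a purely local statement: it controls the $q^2$ one-step extensions of a tile at a single link, but says nothing about which $\Gamma$-orbits those extensions realize, and that is the whole issue. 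Likewise, ``any two vertices lie in a common apartment'' applies to $\tau_1$ and $\tau_2$ in their \emph{given} positions, which will in general be incompatibly placed (overlapping, on the wrong side of one another, or with the wrong type offset); no apartment containing both need contain a parallelogram with initial tile $\tau_1$ and terminal tile $\tau_2$. So the crucial step --- manufacturing the ``suitably placed translate'' of $\tau_2$ --- is missing; and since your (H3) argument consumes the realizability produced by (H2), the gap propagates there as well.

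The missing idea is to use the transitivity of $\Gamma$ on chambers to \emph{create} the good placement rather than to find it. Fix a sector $\fk S$ containing $\tau_1$ with base point $v_{00}(\tau_1)$, as in Figure~\ref{irreducible}. Three rows up in $\fk S$ there are three ``up'' chambers whose initial vertices realize the three types; this is what reconciles the type-cycling constraint you correctly identified, because $M_1$ and $M_2$ shift the type of the base vertex by $1$ and $2$ respectively (Lemma~\ref{01}), so a word of shape $(m,n)$ starting in $A_i$ ends in $A_{i+m+2n}$, and with $m+n=3$ all three residues of $m+2n$ modulo $3$ occur. Choose the chamber $c$ among these three whose initial vertex has the same type as $v_{00}(\tau_2)$; since $\Gamma$ acts transitively on chambers and is type preserving, there is $\gamma\in\Gamma$ with $\gamma^{-1}c$ equal to the initial chamber of $\tau_2$. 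Then $\conv\{\tau_1\cup\gamma\tau_2\}$ is a parallelogram contained in a sector with initial tile $\tau_1$ and terminal tile $\gamma\tau_2$ (Figure~\ref{irreducible'} shows one of the three relative positions), and the associated word gives a directed path of length exactly $3$ from $\Gamma\tau_1$ to $\Gamma\tau_2$ in the graph $\cl G$. This placement argument is the one piece of the proposition that cannot be quoted from \cite{rs2}, and it is what your proposal lacks.
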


\begin{proof} The verification of conditions (H1) and (H2) is a minor modification of
\cite [Proposition 7.9]{rs2}, since $\Gamma$ acts freely on the chambers of $\Delta$, but the verification of (H2) requires a new argument.

Consider the directed graph $\cl G$ which has vertex set $A$
and a directed edge from $a \in A$ to $b \in A$ for each $i=1,2$ such that $M_i(b,a) =1$.
Condition (H2) of \cite{rs2} is the assertion that this graph is irreducible. In order to prove this, fix tiles $\tau_1, \tau_2$ such that
$a=\Gamma \tau_1, b=\Gamma \tau_2$. Fix a sector $\fk S$ containing $\tau_1$, with base point $v_{00}(\tau_1)$, as in Figure \ref {irreducible}.
Choose from among the shaded chambers in Figure \ref{irreducible} the chamber $c$ whose initial vertex has the same type as the initial vertex of
$\tau_2$. Since $\Gamma$ acts transitively on the chambers of $\Delta$, we may choose $\gamma\in\Gamma$ such that $\gamma^{-1}c$
 is the initial chamber of $\tau_2$. Then $\conv \{\tau_1 \cup \gamma\tau_2\}$ is contained in a sector $\fk S'$ with initial tile $\tau_1$. Figure \ref{irreducible'} illustrates one of the three possible relative positions.
 This shows that there is a directed path of length $3$ in the graph $\cl G$ from $\Gamma \tau_1$ to $\Gamma\tau_2$.
 \end{proof}

\refstepcounter{picture}
\begin{figure}[htbp]
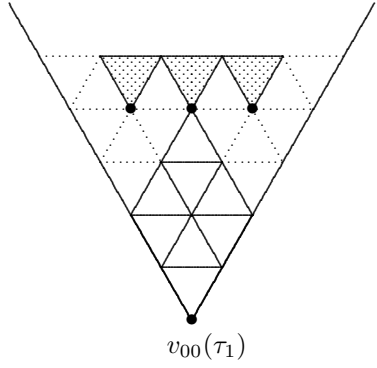
\label{irreducible}
\hfil
\centerline{
\beginpicture
\setcoordinatesystem units <0.4cm,0.7cm>  
\setplotarea x from -5 to 5, y from 0 to 6         
\setlinear \plot
 -1 1  0 0   2 2  0 4  -1 3  0 2  1 3 /
\plot -1 1  0 2  1 1  /
\plot  -1 1  -2 2  -1 3 /
\plot -1 1  1 1  / 
\plot  -2 2  2 2 /  
\plot -1 3  1 3  /   
\setshadegrid span <1.5pt>
\vshade  -3 5 5  <,z,,> -2 4 5 <z,z,,> -1 5 5  <z,z,,> 0 4 5  <z,z,,>  1 5 5 <z,z,,> 2 4 5  <z,,,>  3 5 5  /
\plot -6 6   0 0   6 6  / 
\plot  -3 5  -2 4  -1 5  0 4  1 5  2 4  3 5  -3 5 /       
\put{$v_{00}(\tau_1)$}  at  0.5   -0.5
\put{$\bullet$}  at  0   0
\put{$\bullet$}  at 0    4
\put{$\bullet$}  at 2    4
\put{$\bullet$}  at -2    4
\setdots <2.5pt>
\plot -3 3  -1 3  /   
\plot 1 3  3 3  /   
\plot -4 4  4 4  /   
\plot -5 5  -3 5  /   
\plot 3 5  5 5  /   
\plot -3 3  -2 4  -1 3  /   
\plot 1 3  2 4  3 3  /   
\plot -4 4   -3 5  /   
\plot 4 4   3 5  /   
\endpicture
}
\hfil
\caption{The sector $\fk S$ containing $\tau_1$.}
\end{figure}

\refstepcounter{picture}
\begin{figure}[htbp]
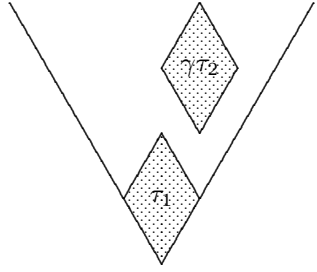
\label{irreducible'}
\hfil
\centerline{
\beginpicture
\setcoordinatesystem units <0.25cm,0.433cm>  
\put{$\tau_1$}  at  0  2
\put{$\gamma\tau_2$}  at  2  6
\setplotarea x from -5 to 5, y from 0 to 8         
\plot -2 2  0 4  2 2 /  
\plot  0 6  2 4  4 6  2 8  0 6 / 
\setshadegrid span <1.5pt>
\vshade  -2 2 2  <,z,,> 0 0 4 <z,,,> 2 2 2  /
\vshade  0 6 6  <,z,,> 2 4 8 <z,,,> 4 6 6 /
\plot -8 8   0 0   8 8  / 
\endpicture
}
\hfil
\caption{The sector $\fk S'$ containing $\tau_1$ and $\gamma\tau_2$.}
\end{figure}

The proof that $\{S_{\pi_2,\pi_1} : \pi_1, \pi_2 \in \Pi_O, \Gamma t(\pi_1)= \Gamma t(\pi_2) \}$ is the set of generators
for a rank 2 Cuntz-Krieger algebra $\cl A_D$, associated with the alphabet $A$ and decorating map $\delta : D \to A$ defined above, is a minor modification of the argument in Section 7 of \cite{rs2}. Let $\fk W_{m,n}=\Gamma\Pi_{m+2,n+2}$, the set of $\Gamma$-orbits of parallelograms of shape $(m+2,n+2)$. The elements of $\fk W_{m,n}$ are in bijective correspondence with the words of shape $(m,n)$ in the alphabet $A$.
See Section \ref{appendix}, and compare with \cite[Lemma 7.1]{rs2}.
Thus the large parallelograms on the right and left of Figure \ref{transition} represent words $ab$ and $a'b'$ of shape $(1,0)$
and $(0,1)$ respectively.

\begin{remark}
  The overlap in the tiles used to define the transition matrices $M_1$ and $M_2$ is needed
  in order to establish the bijection from $W_{m,n}$ onto $\fk W_{m,n}$. It is not possible to use the more straightforward definition
  of the transition matrices, in terms of contiguous, but not overlapping, tiles.
\end{remark}

In  view of Lemma \ref{generate}, we now have the following result.
\begin{proposition}\label{GammaD}
If $\fk A_\Gamma = C(\Omega) \rtimes \Gamma$, then there is an isomorphism
$\fk A_\Gamma\cong \cl A_D$.
\end{proposition}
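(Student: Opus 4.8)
The plan is to apply the uniqueness assertion of \thmref{mainck}. By \lemref{generate} the partial isometries $S_{\pi_2,\pi_1}$ generate $\fk A_\Gamma$, and $\fk A_\Gamma$ is nontrivial since it contains $C(\Omega)$; by Proposition~\ref{M_1M_2} the matrices $M_1,M_2$ satisfy (H1)--(H3), so $\cl A_D$ is well defined and simple. Consequently, as soon as we verify that the $S_{\pi_2,\pi_1}$ satisfy the relations (\ref{rel1*}) defining $\cl A_D$, with the correct indexing set, \thmref{mainck} yields the isomorphism $\fk A_\Gamma\cong\cl A_D$. The whole content of the proposition is therefore the translation of the geometry of parallelograms into the combinatorics of decorated words, followed by the verification of the four relations.

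First I would fix the bijection between $\Pi_O$ and the decorated words $\overline W$. A parallelogram $\pi\in\Pi_O$ of shape $(m+2,n+2)$ is sent to the decorated word $(d,w)\in\overline W_{m,n}$ whose decoration $d\in D$ is the initial tile of $\pi$ (the unique tile of shape $(2,2)$ with base vertex $O$ contained in $\pi$) and whose word $w$ records in its $(i,j)$-entry the $\Gamma$-orbit of the tile occupying position $(i,j)$ in $\pi$. The transition conditions $M_1(w_{i+1,j},w_{i,j})=1$ and $M_2(w_{i,j+1},w_{i,j})=1$ hold because the overlapping pairs of neighbouring tiles in $\pi$ realise precisely the shape-$(3,2)$ and shape-$(2,3)$ configurations of Figure~\ref{transition}; this is the correspondence $\fk W_{m,n}=\Gamma\Pi_{m+2,n+2}$ already noted, a minor modification of \cite[Lemma 7.1]{rs2}. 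Under it $t(w)\in A$ is the terminal tile orbit $\Gamma t(\pi)$, so the condition $\Gamma t(\pi_1)=\Gamma t(\pi_2)$ attached to $S_{\pi_2,\pi_1}$ matches the condition $t(u)=t(v)$ attached to $s_{u,v}$, and I would define the candidate isomorphism on generators by $s_{(d_2,w_2),(d_1,w_1)}\mapsto S_{\pi_2,\pi_1}$, where $\pi_i\in\Pi_O$ corresponds to $(d_i,w_i)$.

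Relations (\ref{rel1a*}), (\ref{rel1b*}) and (\ref{rel1d*}) are then routine. Writing $S_{\pi_2,\pi_1}=\gamma{\bf 1}_{\Omega(\pi_1)}={\bf 1}_{\Omega(\pi_2)}\gamma$ with the unique $\gamma\in\Gamma$ satisfying $\gamma t(\pi_1)=t(\pi_2)$, one computes $S_{\pi_2,\pi_1}^{*}=\gamma^{-1}{\bf 1}_{\Omega(\pi_2)}=S_{\pi_1,\pi_2}$, giving (\ref{rel1a*}); composing two such partial isometries multiplies the group elements and composes the tile-matchings, and freeness of the $\Gamma$-action on chambers guarantees that the intermediate element is uniquely determined, giving (\ref{rel1b*}). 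For (\ref{rel1d*}), the initial projections of the shape-$(0,0)$ generators are the ${\bf 1}_{\Omega(\tau)}$ with $\tau\in D$, and these are pairwise orthogonal because a sector $[O,\omega)$ contains a unique tile of shape $(2,2)$ based at $O$, so the clopen sets $\Omega(\tau)$ are disjoint for distinct $\tau$.

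The crux is the Cuntz--Krieger relation (\ref{rel1c*}). I would derive it from the clopen decomposition
\[
{\bf 1}_{\Omega(\pi)}=\sum_{\pi'}{\bf 1}_{\Omega(\pi')},
\]
in which $\pi'$ ranges over the one-step extensions of $\pi\in\Pi_O$ in the fixed direction $\varepsilon\in\{(1,0),(0,1)\}$: every sector $[v_{00}(\pi),\omega)$ containing $\pi$ contains exactly one such extension, so the $\Omega(\pi')$ partition $\Omega(\pi)$. Multiplying by the relevant $\gamma$, each summand $\gamma{\bf 1}_{\Omega(\pi_1')}$ must be identified as a generator $S_{\pi_2',\pi_1'}$ with $\pi_2'\in\Pi_O$; here the key geometric input is \cite[Lemma 7.5]{rs2}, the invariance of the terminal tile of a parallelogram of shape $\ge(2,2)$ under change of base vertex, which guarantees both that $\gamma t(\pi_1')$ is again the terminal tile of a parallelogram $\pi_2'$ based at $O$ and that the element matching $\pi_1'$ to $\pi_2'$ is still $\gamma$ (exactly as in the proof of \lemref{generate}). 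Finally the one-step extensions $\pi'$ are in bijection with the words $w$ of shape $\varepsilon$ satisfying $o(w)=t(u)=t(v)$, so the resulting sum is precisely the right-hand side of (\ref{rel1c*}). I expect this step---the decomposition of $\Omega(\pi)$ together with the verification that the extension data for the base and terminal parallelograms are carried onto each other by a single $\gamma$---to be the main obstacle, since it is where the building geometry does the real work; the remainder is the formal transcription of the argument in \cite[Section 7]{rs2}.
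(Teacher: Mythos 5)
Your proposal is correct and follows essentially the same route as the paper: the paper obtains Proposition~\ref{GammaD} by combining Lemma~\ref{generate} with the observation that the correspondence $\fk W_{m,n}=\Gamma\Pi_{m+2,n+2}$ (the analogue of \cite[Lemma 7.1]{rs2}) lets one verify, by a minor modification of Section~7 of \cite{rs2}, that the $S_{\pi_2,\pi_1}$ satisfy the relations (\ref{rel1*}), after which the uniqueness statement of Theorem~\ref{mainck} gives the isomorphism. Your write-up simply makes explicit (the decorated-word bijection, the routine relations, the partition of $\Omega(\pi)$ by one-step extensions, and the role of \cite[Lemma 7.5]{rs2}) what the paper compresses into its citations.
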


The next result will be used later.

\begin{lemma}\label{01} Let $i\in\{0,1,2\}$ and $j\in\{0,1\}$. If $a\in A_i$ then $M_j(b,a)=1$ only if $b\in A_{i+j}$. Moreover, and each row or column of the matrix $M_j$ has precisely $q^2$ nonzero entries.
\end{lemma}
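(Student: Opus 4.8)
There are two assertions to establish: that the nonzero entries of each $M_j$ shift the type index in the prescribed way, and that every row and every column of $M_j$ contains exactly $q^2$ ones. The plan is to treat the type shift first, since it is essentially bookkeeping, and then to reduce both counting statements to a single incidence count in the projective plane that is the link of a vertex. (I read the conclusion as the unambiguous statement that $M_1$ carries $A_i$ into $A_{i+1}$ and $M_2$ carries $A_i$ into $A_{i+2}$, matching the shapes $(3,2)$ and $(2,3)$ used to define the two matrices.)

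For the type shift I would first observe that, because $\Gamma$ is type preserving, every tile in a fixed orbit $\Gamma\tau$ has a base vertex of one and the same type; thus $A=A_0\sqcup A_1\sqcup A_2$ is well defined and the type of an orbit is the type of the base vertex of any representative. From the edge orientations of Figure~\ref{chamber1} one reads off the elementary rule that if $\sigma(u,v)=(m,n)$ then the type of $v$ exceeds that of $u$ by $m-n$ modulo $3$; equivalently the type of $v_{ij}(\pi)$ exceeds that of $v_{00}(\pi)$ by $i-j$. Now suppose $M_1(b,a)=1$, say $a=\Gamma\tau_1$, $b=\Gamma\tau_2$, with $\tau_1\cup\tau_2$ a parallelogram $\pi$ of shape $(3,2)$ whose initial tile is $\tau_1$ and whose terminal tile is $\tau_2=\conv\{v_{10}(\pi),v_{32}(\pi)\}$. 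If $v_{00}(\pi)$ has type $i$, then the base vertex $v_{10}(\pi)$ of $\tau_2$ satisfies $\sigma(v_{00}(\pi),v_{10}(\pi))=(1,0)$ and so has type $i+1$; hence $b\in A_{i+1}$. The corresponding parallelogram for $M_2$ has terminal tile with base vertex $v_{01}(\pi)$, for which $\sigma=(0,1)$ gives type $i-1\equiv i+2$, so $b\in A_{i+2}$. This is the asserted shift.

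For the count I would fix a representative and invoke the bijection between words of shape $(m,n)$ and orbits in $\fk W_{m,n}=\Gamma\Pi_{m+2,n+2}$. The column sum of $M_1$ at $a$ is the number of letters $b$ with $M_1(b,a)=1$, i.e.\ the number of words of shape $(1,0)$ with initial letter $a$, which by the bijection is the number of $\Gamma$-orbits of parallelograms of shape $(3,2)$ whose initial-tile orbit is $a$. Since $\Gamma$ acts freely on chambers, the stabiliser of a tile is trivial, so each such orbit has a unique representative whose initial tile is a fixed representative $\tau_1$ of $a$; hence the column sum equals the number of parallelograms of shape $(3,2)$ with initial tile exactly $\tau_1$. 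Such a parallelogram is $\conv\{v_{00}(\tau_1),v_{32}\}$ and is determined by its apex $v_{32}$, which must be an $m$-neighbour of the apex $v_{22}(\tau_1)$ continuing the geodesic straight; in the projective-plane link of $v_{22}(\tau_1)$ this means choosing a point not incident with the line determined by the incoming edge from $v_{12}(\tau_1)$. There are $(q^2+q+1)-(q+1)=q^2$ such points, so the column sum is $q^2$.

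The row sum of $M_1$ at $b$ is handled dually: fixing a representative $\tau_2$ of $b$, it equals the number of parallelograms of shape $(3,2)$ with terminal tile exactly $\tau_2$, each determined by its base vertex, which is an $m$-predecessor of the base vertex of $\tau_2$ continuing the geodesic straight; this is a choice of a line not through a fixed point in the corresponding link, again $(q^2+q+1)-(q+1)=q^2$. The statements for $M_2$ follow verbatim with the roles of $m$ and $n$, and hence of points and lines, interchanged. I expect the only genuinely delicate point to be the building-theoretic fact that straight geodesic continuation at $v_{22}(\tau_1)$ (respectively backward at the base of $\tau_2$) corresponds exactly to the off-incidence condition and yields a parallelogram of the full shape $(3,2)$ containing $\tau_1$; once this local statement is pinned down, the incidence count together with the orbit bijection completes the proof.
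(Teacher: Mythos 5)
Your proposal is correct, and it shares the paper's overall reduction: both arguments pass, via the word/parallelogram-orbit correspondence and freeness of the $\Gamma$-action on chambers, from counting nonzero entries of $M_j$ to counting shape-$(3,2)$ (resp.\ $(2,3)$) parallelograms that extend one fixed representative tile. Where you genuinely differ is in how that local count is carried out. The paper fixes the terminal tile $\tau$ of the orbit $b=\Gamma\tau$ and grows the parallelogram chamber by chamber: there are $q$ choices for a chamber $\delta_1$ glued along a boundary edge of $\tau$, then $q$ choices for $\delta_2$ glued to $\delta_1$, after which the parallelogram is determined by convexity, giving $q\cdot q = q^2$. You instead observe that the extension is determined by a single new corner vertex, and you count its admissible positions inside the link of $v_{22}(\tau_1)$ (forward case) or of the base vertex of $\tau_2$ (backward case): points off a fixed line, respectively lines missing a fixed point, giving $(q^2+q+1)-(q+1)=q^2$ at one stroke. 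Your route makes the appearance of $q^2$ transparent as a projective-plane incidence count; the paper's is more elementary and never needs to characterize straight continuation in link terms. The ``delicate point'' you defer---that off-incidence is equivalent to $\sigma(v_{00},v_{32})=(3,2)$, and that this shape forces $\conv\{v_{00},v_{32}\}$ to contain $\tau_1$---is genuinely needed, but it is no deeper than what the paper itself asserts without proof ($q$ valid choices at each chamber step, determination by convexity); it can be checked by placing $v_{00}$ and the edge $\{v_{22},v_{32}\}$ in a common apartment, where the only shapes available to a type-increasing neighbour of $v_{22}$ are $(2,1)$, $(1,3)$ and $(3,2)$, realized exactly by $v_{32}=v_{21}$, by points on the line $v_{12}$, and by points off it. Your explicit verification of the type shift is also fine; the paper dismisses that part as clear from the definition. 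One cosmetic caution: the paper's row/column terminology for $M_j(b,a)$ is the transpose of yours (it calls the count with $b$ fixed the column count), but since you establish both counts, nothing depends on this.
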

\begin{proof}
The first statement is clear from the definition of the transition matrices. We prove the second statement in the case $j=1$.
Choose $b=\Gamma\tau \in A$ and refer to Figure \ref{q2choices}, where $\tau$ is shaded.
There are precisely $q^2$ parallelograms $\pi$ of shape $(3,2)$ such that
  $t(\pi)=\tau$.
  For once $\tau$ is chosen, there are $q$ choices for
  the chamber $\delta_1$. Once $\delta_1$ is chosen, there are $q$ choices for $\delta_2$ and the whole parallelogram
  of shape $(3,2)$ is then completely determined. It follows that there are $q^2$ possibilities, as claimed.
  This proves that for each $b \in A$, there are $q^2$ choices for $a \in A$ such that $M_1(b,a)=1$. That is, each column of the matrix $M_1$ has precisely $q^2$ nonzero entries. A similar argument applies to rows.
  \refstepcounter{picture}
\begin{figure}[htbp]\label{q2choices}
\hfil
\centerline{
\beginpicture
\setcoordinatesystem units <0.4cm,0.7cm> 
\setplotarea x from -5 to 5, y from 0 to 5         
\setlinear \plot
 -1 1  0 0   2 2  0 4  -1 3  0 2  1 3 /
\plot -1 1  0 2  1 1  /
\plot  -1 1  -2 2  -1 3 /
\plot -1 1  1 1  / 
\plot  -2 2  2 2 /  
\plot -1 3  1 3  /   
\plot 0 4   1 5  2 4  0 4  1 3  2 4  3 3  1 3  2 2  3 3 /  
\setshadegrid span <1.5pt>
\vshade  -1  3  3  <,z,,>  1  1  5  <z,,,>  3  3  3  /
\put{{\small $\delta_1$}}   at  0 1.4
\put{{\small $\delta_2$}}   at  0 0.65
\arrow <10pt> [.3,.67] from   5.4  1.5   to     2.8  2.2
\put{$\tau$}  at 6  1.4
\endpicture
}
\caption{}
\end{figure}
\end{proof}

\section{K-theory}\label{K-theory}

The main result of \cite{rs3}, later extended to a more general class of $C^*$-algebras in \cite{ev}, is that the K-theory of a rank 2 Cuntz-Krieger algebra can be expressed in terms of the transition matrices $M_1, M_2$.
The matrix $\begin{smallmatrix}(I-M_1,&I-M_2)\end{smallmatrix}$ defines a homomorphism $\bb Z^A\oplus \bb Z^A \to \bb Z^A$. Let $r$ be the rank, and $T$ the torsion part, of the finitely generated abelian group
$C(\Gamma)=\coker\begin{smallmatrix}(I-M_1,&I-M_2)\end{smallmatrix}$. Thus
$C(\Gamma)\cong\bb Z^r\oplus T$.

\begin{theorem}\label{Ki}
Let $\Delta$ be a building of type $\widetilde A_2$ and order $q$, with maximal boundary $\Omega$.
Let $\Gamma$ be a group of type preserving automorphisms of $\Delta$ which acts regularly on the chambers of $\Delta$.
Then
\begin{equation}\label{K}
K_0(\fk A_\Gamma) = K_1(\fk A_\Gamma)= \bb Z^{2r}\oplus T.
\end{equation}
\end{theorem}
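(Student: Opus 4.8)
The plan is to reduce to the K-theory formula for rank $2$ Cuntz-Krieger algebras and then evaluate it. By Proposition~\ref{GammaD} we have $\fk A_\Gamma\cong\cl A_D$, and by Proposition~\ref{M_1M_2} the pair $(M_1,M_2)$ satisfies (H1)--(H3), so $\cl A_D$ is a genuine rank $2$ Cuntz-Krieger algebra to which the formula of \cite{rs3}, in the generality of \cite{ev}, applies. That formula depends only on the system $(A,M_1,M_2)$ and not on the decoration $D$ --- consistent with Lemma~\ref{decorate*} and Remark~\ref{decorate}, since passing to another full decoration alters $\cl A_D$ only by operations that preserve the K-groups as stable invariants. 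It therefore remains to compute these K-groups from $M_1$ and $M_2$ alone.

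First I would form the chain complex underlying the formula. Since $M_1M_2=M_2M_1$ by (H1), the sequence
\[
\bb Z^A\xrightarrow{\ \partial_2\ }\bb Z^A\oplus\bb Z^A\xrightarrow{\ \partial_1\ }\bb Z^A,\qquad
\partial_1=(I-M_1,\,I-M_2),\quad \partial_2=\tbinom{-(I-M_2)}{I-M_1},
\]
is a complex, for $\partial_1\partial_2=-(I-M_1)(I-M_2)+(I-M_2)(I-M_1)=0$. For a rank $2$ system the spectral sequence of \cite{ev} collapses, and \cite{rs3,ev} give $K_0(\cl A_D)\cong H_0\oplus H_2$ and $K_1(\cl A_D)\cong H_1$, where $H_\bullet$ denotes the homology of this complex. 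By construction $H_0=\coker\partial_1=C(\Gamma)\cong\bb Z^r\oplus T$, while $H_2=\ker(I-M_1)\cap\ker(I-M_2)$, a subgroup of $\bb Z^A$ and hence free.

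The substance of the argument is to pin down $H_1$ and $H_2$, and here I would invoke the duality of the $\widetilde A_2$ geometry. The simultaneous reversal of all edge orientations interchanges the two word-directions and should yield a permutation $P$ of $A$ with $PM_1P^{-1}=M_2^{\,t}$ and $PM_2P^{-1}=M_1^{\,t}$. Granting this, conjugation by $P$ carries $H_2=\ker(I-M_1)\cap\ker(I-M_2)$ isomorphically onto $\ker(I-M_1^{\,t})\cap\ker(I-M_2^{\,t})$; and since the $\bb Z$-dual of the complex above is (up to sign and reindexing) the corresponding complex for $M_1^{\,t},M_2^{\,t}$, the universal coefficient theorem identifies this last group with $\mathrm{Hom}(H_0,\bb Z)$, whence $H_2\cong\bb Z^r$, of rank equal to the free rank of $H_0$. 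The same universal coefficient sequence gives $\mathrm{torsion}(H_1)\cong\mathrm{torsion}\,\coker(I-M_1^{\,t},I-M_2^{\,t})$, and $P$ identifies this cokernel with $H_0$, so $\mathrm{torsion}(H_1)\cong T$; the vanishing of the alternating sum of ranks then forces $\operatorname{rank}H_1=\operatorname{rank}H_0+\operatorname{rank}H_2=2r$. Hence $H_1\cong\bb Z^{2r}\oplus T$, and we conclude
\[
K_0(\fk A_\Gamma)\cong(\bb Z^r\oplus T)\oplus\bb Z^r\cong\bb Z^{2r}\oplus T\cong H_1\cong K_1(\fk A_\Gamma).
\]

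I expect the main obstacle to be twofold. The geometric duality $P$ --- which is precisely what makes the answer symmetric in $K_0$ and $K_1$ --- must be extracted from the combinatorics of Figure~\ref{transition}, and it is not a formal consequence of (H1)--(H3); and one must verify that the extension $0\to H_2\to K_0\to H_0\to 0$ coming from the spectral sequence splits, so that $K_0$ really is $\bb Z^{2r}\oplus T$ rather than an extension with smaller torsion. Both points are handled by the machinery of \cite{rs3,ev}. As a consistency check, in the setting of Theorem~\ref{mainboundaryaction} one has $r=0$ and $T\cong\bb Z_{q^2-1}$, so both difficulties evaporate and $K_0\cong K_1\cong\bb Z_{q^2-1}$.
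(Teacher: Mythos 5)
Your proposal is correct and takes essentially the same route as the paper: the paper's proof consists of citing \cite[Proposition 4.13]{rs3} (the rank-2 Cuntz--Krieger K-theory formula, which you reconstruct via the Koszul complex and the collapsed spectral sequence of \cite{ev}) together with the analogue of \cite[Lemma 5.1]{rs3}, and that analogue is precisely the orientation-reversing (opposite-tile) duality permutation $P$ with $PM_1P^{-1}=M_2^{\,t}$, $PM_2P^{-1}=M_1^{\,t}$ that you single out as the key non-formal input. One small correction: the spectral-sequence extension runs $0\to H_0\to K_0\to H_2\to 0$ (not the reverse, as you wrote), so it splits automatically because the quotient $H_2=\ker(I-M_1)\cap\ker(I-M_2)\subseteq\bb Z^A$ is free, and the worry about ``smaller torsion'' never arises.
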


\begin{proof}
The equation (\ref{K}) is the result of combining \cite[Proposition 4.13]{rs3} with the analogue of \cite[Lemma 5.1]{rs3}, whose proof carries over without change to the larger tiles considered in the present article.
\end{proof}

In order to compute the K-theory of $\fk A_\Gamma$ it is convenient to reduce the size of the tiles.
A ball of radius one in an apartment is a hexagon containing six chambers.
A \textit{reduced tile} is a hexagon together with a choice of one of its six chambers, which determines the direction ``up''.
The reduced tile $\xi$ is said to be of type $i\in \bb Z_3$, if its central vertex $v_{11}(\xi)$ is of type $i$.
Denote by $X_i$ the set of reduced tiles of type $i$.
If $\xi \in X_i$ then its chambers are denoted $\xi_k, \overline\xi_k, k\in \bb Z_3$ as in Figure \ref{reduced tile},
with the ``up'' chamber denoted by $\xi_i$.
The vertices $v_{kl}(\xi)$ of type $k-l+i$ are labelled as in Figure \ref{reduced tile}.
This is consistent with previous notation, in the sense that if $\tau$ is any tile containing the hexagon $\xi$
then $v_{kl}(\xi)=v_{kl}(\tau)$ whenever both sides are defined.
Note that there are $q^2$ tiles $\tau$ containing $\xi$, since once the hexagon is fixed there are $q$ choices of $v_{00}(\tau)$
and $q$ choices of $v_{22}(\tau)$.

\begin{remark}
  The link of a vertex $v$ in $\Delta$ is the spherical building of a projective plane of order $q$.
   The hexagonal boundary of a reduced tile $\xi$ with central vertex $v$ is an apartment in this spherical building. The vertex $v_{ij}(\xi)$ is a point [line]  in the projective plane if $i-j=1$ $[i-j=2]$.
\end{remark}

\refstepcounter{picture}
\begin{figure}[htbp]
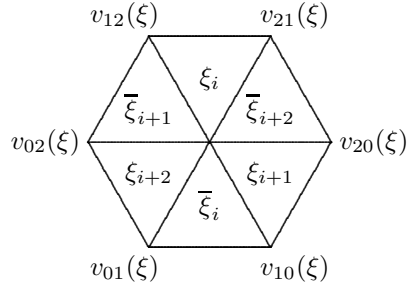
\label{reduced tile}
\hfil
\centerline{
\beginpicture
\setcoordinatesystem units <0.8cm,1.4cm>  point at -4 0 
\setplotarea x from -5 to 5, y from 1 to 4         
\setlinear
\plot -1 1  1 1   2 2  1 3  -1 3  0 2  1 3 /
\plot -1 1  0 2  1 1  /
\plot  -1 1  -2 2  -1 3 /
\plot -1 1  1 1  / 
\plot  -2 2  2 2 /  
\plot -1 3  1 3  /   
\put{$v_{10}(\xi)$}     at 1.4   0.8
\put{$v_{01}(\xi)$}     at -1.4  0.8
\put{$v_{20}(\xi)$}     at  2.7  2
\put{$v_{02}(\xi)$}     at -2.7  2
\put{$v_{21}(\xi)$}     at  1.4   3.2
\put{$v_{12}(\xi)$}     at -1.4   3.2
\put{$\xi_i$}       at  0  2.6
\put{$\overline \xi_i$}       at  0  1.4
\put{$\xi_{i+1}$}              at  1  1.7
\put{$\overline \xi_{i+1}$}    at -1  2.3
\put{$\xi_{i+2}$}              at  -1  1.7
\put{$\overline \xi_{i+2}$}    at  1  2.3
\endpicture
}
\hfil
\caption{A reduced tile of type $i$.}
\end{figure}

 Define a new alphabet $\Lambda$ to be the set of $\Gamma$-orbits of reduced tiles. Since the action of $\Gamma$ is type preserving,
we may write $\Lambda=\Lambda_1\cup\Lambda_2\cup\Lambda_3$ where $\Lambda_i=\{\Gamma\xi : \xi\in X_i \}$.
For each fixed chamber $\delta$, there are $q^3$ reduced tiles $\xi \in X_i$ such that $\xi_i= \delta$ \cite[Lemma 4.7]{RR}.
Since $\Gamma$ acts regularly on the set of chambers of $\Delta$, $\#\Lambda_i=q^3$ and $\#\Lambda=3q^3$.
Each reduced tile is contained in $q^2$ tiles, and so $\# A= 3q^5$.

The new transition matrices
$N_1, N_2$ are defined by the diagrams in Figure \ref{new transition}. In each of the diagrams,
the region is a union of two hexagons (one shaded, one partly shaded), which are representatives of the elements $a, b$ respectively.
\refstepcounter{picture}
\begin{figure}[htbp]
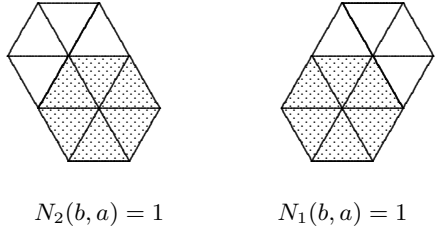
\label{new transition}
\hfil
\centerline{
\beginpicture
\setcoordinatesystem units <0.4cm,0.7cm>  point at -4 0 
\setplotarea x from -5 to 5, y from 1 to 4         
\setlinear
\plot -1 1  1 1   2 2  0 4  -1 3  0 2  1 3 /
\plot -1 1  0 2  1 1  /
\plot  -1 1  -2 2  -1 3 /
\plot -1 1  1 1  / 
\plot  -2 2  2 2 /  
\plot -1 3  1 3  /   
\plot 0 4  2 4  0 4  1 3  2 4  3 3  1 3  2 2  3 3 /  
\setshadegrid span <1.5pt>
\vshade  -2  2  2  <,z,,>  -1 1 3  <z,,,> 1 1 3  <z,,,>  2  2  2  /
\put{{\small $N_1(b,a)=1$}}   at  0 0
\setcoordinatesystem units <0.4cm,0.7cm>  point at 4 0 
\setplotarea x from -5 to 5, y from 2 to 4         
\setlinear
\plot -1 1  1 1  2 2  0 4  -1 3  0 2  1 3 /
\plot -1 1  0 2  1 1  /
\plot  -1 1  -2 2  -1 3 /
\plot -1 1  1 1  / 
\plot  -2 2  2 2 /  
\plot -1 3  1 3  /   
\plot  -2 4  0 4  -1 3  -2 4  -3 3  -1 3  -2 2  -3 3 /  
\setshadegrid span <1.5pt>
\vshade  -2  2  2  <,z,,>  -1 1 3  <z,,,> 1 1 3  <z,,,>  2  2  2  /
\put{{\small $N_2(b,a)=1$}}   at  0 0
\endpicture
}
\hfil
\caption{New transition matrices.}
\end{figure}
Once the shaded hexagon is chosen, there are $q^2$ choices
for the hexagon at the higher level which contains two shaded triangles. For there are $q$ choices for the unshaded triangle meeting the shaded hexagon
in its upper horizontal edge. Once this unshaded triangle is chosen there are $q$ choices for the unshaded triangle adjacent to it. The remaining two unshaded triangles are then completely determined, since they lie in the convex hull of the region consisting of the
shaded hexagon and the two chosen triangles.

\begin{lemma}\label{cokerN}
  $C(\Gamma)=\coker\begin{smallmatrix}(I-N_1,&I-N_2)\end{smallmatrix}$.
\end{lemma}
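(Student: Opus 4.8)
The plan is to compare the two cokernels through the geometric reduction that forgets a tile's two corner chambers. A tile $\tau=\conv\{v_{00},v_{22}\}$ consists of the central hexagon around $v_{11}$ (its reduced tile, together with the ``up'' chamber $\{v_{11},v_{12},v_{21}\}$) plus the two corner chambers at $v_{00}$ and at $v_{22}$; conversely each reduced tile is the hexagon of exactly $q^2$ tiles, with $q$ independent choices for each corner. I would first record the following consequence of the transition relations in $C(\Gamma)=\coker(I-M_1,I-M_2)$: the class $[\tau]$ is independent of the base corner at $v_{00}$. Indeed, an inspection of Figure \ref{transition} shows that the $M_1$- and $M_2$-successors of $\tau$ are built entirely from its upper portion (the successor's own base corner is pinned down by the hexagon of $\tau$), so two tiles agreeing in hexagon and apex corner but differing at $v_{00}$ have identical successor sets, and the relations $[\tau]=\sum_{\text{succ}}[\tau']$ force their classes to agree.

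Next I would construct a homomorphism $\Phi\colon\coker(I-N_1,I-N_2)\to\coker(I-M_1,I-M_2)$ by sending a reduced tile $\eta$ to $Y(\eta):=\sum_{\theta}[\tau_{\eta,\theta}]$, the sum over the $q$ tiles with hexagon $\eta$ and a fixed base corner as the apex corner $\theta$ varies. The key combinatorial input, read off from Figures \ref{transition} and \ref{new transition}, is that for a fixed source hexagon the pair (apex corner of $\tau$, the free internal choice $g$) runs bijectively over the $q^2$ admissible successor hexagons of $N_1$; summing the $M_1$-relation over apex corners then yields exactly $Y(\eta)=\sum_{N_1(\mu,\eta)=1}Y(\mu)$, and similarly for $N_2$, so $\Phi$ is well defined. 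Moreover $\Phi$ is surjective: rewriting the $M_1$-relation using base-corner independence gives $[\tau_{\eta,\theta}]=\sum_g Y\big(\eta_2(\eta,\theta,g)\big)$, expressing every generator of $\coker(I-M_1,I-M_2)$ as a sum of values of $\Phi$.

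The remaining, and decisive, step is injectivity of $\Phi$. Here I would exploit the orientation-reversing involution $\rho$ on tiles that interchanges the base and apex vertices. Reversal carries a $(3,2)$-parallelogram to a $(3,2)$-parallelogram with initial and terminal tiles exchanged, so $\rho$ conjugates $M_i$ to its transpose and hence induces an isomorphism $\coker(I-M_1,I-M_2)\cong\coker(I-M_1^{\mathrm T},I-M_2^{\mathrm T})$; since $\rho$ also swaps the two corners, base-corner independence for the reversed system transports to apex-corner independence in $C(\Gamma)$. With both corners now inert, $[\tau]$ depends only on the orbit of its hexagon, and $\coker(I-M_1,I-M_2)$ becomes a quotient of $\bb Z^\Lambda$ by relations coming from the $M_i$.

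The main obstacle is to show that these relations coincide with the $N_i$-relations, i.e.\ that $\Phi$ introduces no collapse. Concretely, the $M_1$-expansion of a hexagon class partitions the $q^2$ successor hexagons into $q$ families indexed by the freed apex corner and attaches the factor $q$, whereas the $N_1$-relation sums all $q^2$ successors with coefficient one; the extra content is precisely that the $q$ family-sums must coincide. I expect to force this from the irreducibility of the transition graph established in Proposition \ref{M_1M_2} (condition (H2)), which should propagate equality of the family-sums through the system, or else to obtain injectivity directly by transcribing the chain-level tile-reduction argument of \cite{rs3}, exactly as is done for the analogue of \cite[Lemma 5.1]{rs3} in the proof of Theorem \ref{Ki}. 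Combining surjectivity with this gives the desired isomorphism $C(\Gamma)=\coker(I-N_1,I-N_2)$.
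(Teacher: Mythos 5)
Your opening moves are sound and reproduce the easy half of the reduction: base-corner independence is correct (the $M_j$-successor set of a tile $\tau$ is read off from $\conv\{v_{10}(\tau),v_{22}(\tau)\}$, resp.\ $\conv\{v_{01}(\tau),v_{22}(\tau)\}$, which contain the apex chamber but not the base chamber, so the relations $a=\sum_b M_j(b,a)\,b$ identify tiles agreeing in hexagon and apex), the map $\Phi$ is well defined, and it is surjective. The gap is exactly where you place it, at injectivity, and the argument you offer there fails for a structural reason. A small slip first: reversal carries a parallelogram of shape $(3,2)$ to one of shape $(2,3)$, so $\rho$ conjugates $M_1$ to $M_2^{\mathrm T}$ and $M_2$ to $M_1^{\mathrm T}$; this still gives an isomorphism of $C(\Gamma)$ with $\coker(I-M_1^{\mathrm T},\,I-M_2^{\mathrm T})$ sending $[a]$ to $[\rho a]$, so that much survives. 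What does not survive is the transport step. In the transposed cokernel the relations express each letter as the sum of its $M_j$-\emph{predecessors}, and predecessor sets depend only on hexagon and base chamber; hence the letters identified there are those sharing hexagon and base. Pulling this back through $\rho$, which \emph{simultaneously} exchanges base with apex, you land on the statement that letters of $C(\Gamma)$ sharing hexagon and apex have equal classes --- that is, base-corner independence again, which you already had. The reversal swaps successors with predecessors at the same time as it swaps the two corners, and the two swaps cancel; no apex-corner independence can come out of it. The underlying asymmetry is that in a cokernel presented by relations $a=\sum_b M_j(b,a)\,b$, equality of successor sets forces equality of classes, but equality of predecessor sets does not; that is precisely why deleting the apex corner is the nontrivial half.

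Your two fallback suggestions do not close this hole. Irreducibility (H2), proved in Proposition \ref{M_1M_2}, is a statement about the supports of $M_1,M_2$ only; it cannot force integral relations such as the equality of your $q$ family-sums (two systems can have identical irreducible support patterns and different cokernels). And ``transcribing the chain-level tile-reduction argument of \cite{rs3}'' is not an argument but a pointer to the paper's own proof: Lemma \ref{cokerN} is proved there by applying parts (i) and (ii) of \cite[Lemma 6.1]{rs3} successively, following the two-step reduction of Figure \ref{reduction} --- first delete the base chamber (the step you did prove, where columns of the transition matrices are constant along the fibres of the reduction), then delete the apex chamber (the step you are missing, where instead the rows are constant along the fibres). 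Part (ii) of that lemma is exactly the assertion that the fibre-summing map you call $\Phi$ is an isomorphism in the latter situation. So as written, your proposal establishes well-definedness and surjectivity of $\Phi$ but leaves the decisive injectivity unproved; that injectivity is the actual content of the lemma.
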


\begin{proof}
  This follows by applying successively parts (i) and (ii) of \cite[Lemma 6.1]{rs3} as indicated in Figure \ref{reduction}.
  The resulting $\{0,1\}$ matrices are precisely$N_1$ and $N_2$.
\end{proof}

\refstepcounter{picture}
\begin{figure}[htbp]
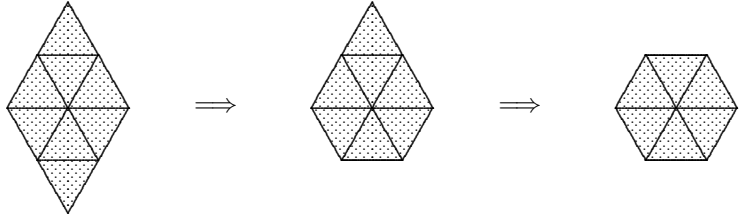
\label{reduction}
\hfil
\centerline{
\beginpicture
\setcoordinatesystem units <0.4cm,0.7cm> 
\setplotarea x from -5 to 5, y from 1 to 4         
\put{
\beginpicture    
\setlinear \plot
 -1 1  0 0   2 2  0 4  -1 3  0 2  1 3 /
\plot -1 1  0 2  1 1  /
\plot  -1 1  -2 2  -1 3 /
\plot -1 1  1 1  / 
\plot  -2 2  2 2 /  
\plot -1 3  1 3  /   
\setshadegrid span <1.5pt>
\vshade  -2  2  2  <,z,,>  0  0  4  <z,,,>  2  2  2  /
\endpicture
}
at -10 0
\put{$\Longrightarrow$} at  -5 2
\put{
\beginpicture    
\setlinear \plot
 -1 1  1 1   2 2  1 3  0 4  -1 3  0 2  1 3 /
\plot -1 1  0 2  1 1  /
\plot  -1 1  -2 2  -1 3 /
\plot -1 1  1 1  / 
\plot  -2 2  2 2 /  
\plot -1 3  1 3  /   
\setshadegrid span <1.5pt>
\vshade  -2  2  2  <,z,,>  -1 1 3  <z,z,,>  0  1  4  <z,z,,>   1  1  3 <z,,,>  2  2  2  /
\endpicture
}
at 0 0
\put{$\Longrightarrow$} at  5 2
\put{
\beginpicture  
\setlinear
\plot -1 1  1 1   2 2  1 3  -1 3  0 2  1 3 /
\plot -1 1  0 2  1 1  /
\plot  -1 1  -2 2  -1 3 /
\plot -1 1  1 1  / 
\plot  -2 2  2 2 /  
\plot -1 3  1 3  /   
\setshadegrid span <1.5pt>
\vshade  -2  2  2  <,z,,>  -1 1 3  <z,,,> 1 1 3  <z,,,>  2  2  2  /
\endpicture
}
at  10 0
\endpicture
} 
\hfil
\caption{Two-step reduction of a tile to a hexagon}
\end{figure}

\begin{remark}
  The $\{0,1\}$-matrices $N_1, N_2$ cannot be used to define the rank 2 Cuntz-Krieger algebra,
although they can be used to compute its K-theory. This is because the analogue of \cite[Lemma 7.4]{rs2} would fail.
\end{remark}

\begin{proposition}\label{basicK}
For $i=1,2$, we have  $K_i(\fk A_\Gamma)=\bb Z_{q^2-1}$.
\end{proposition}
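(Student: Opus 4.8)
The plan is to reduce everything to an explicit cokernel computation and then carry it out using the $\bb Z_3$-grading of $\Lambda$ together with the column-regularity of $N_1,N_2$. By \thmref{Ki} we have $K_i(\fk A_\Gamma)=\bb Z^{2r}\oplus T$, where $\bb Z^r\oplus T=C(\Gamma)$, and by \lemref{cokerN} this group may be computed from the reduced matrices as $C(\Gamma)=\coker(I-N_1,I-N_2)$. Thus it suffices to prove that this cokernel is isomorphic to $\bb Z_{q^2-1}$: that forces $r=0$ and $T=\bb Z_{q^2-1}$, whence $K_i(\fk A_\Gamma)=\bb Z_{q^2-1}$ for $i=1,2$. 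Write $C=\coker(I-N_1,I-N_2)=\bb Z^\Lambda/\im(I-N_1,I-N_2)$, and for $a\in\Lambda$ let $[e_a]$ denote the class of the corresponding basis vector; the defining relations of $C$ are then $[e_a]=\sum_b N_j(b,a)[e_b]$ for $j=1,2$.

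First I would establish the lower bound by augmentation. The count following Figure~\ref{new transition} shows that every column of $N_1$ and of $N_2$ has exactly $q^2$ nonzero entries. Hence the augmentation $\epsilon\colon\bb Z^\Lambda\to\bb Z$, $e_a\mapsto 1$, sends each generating column $(I-N_j)e_a$ of $\im(I-N_1,I-N_2)$ to $1-q^2$, so $\epsilon\big(\im(I-N_1,I-N_2)\big)=(q^2-1)\bb Z$ and $\epsilon$ descends to a surjection $\bar\epsilon\colon C\to\bb Z_{q^2-1}$ with $\bar\epsilon([e_a])=1$. In particular the order of each $[e_a]$ in $C$ is a multiple of $q^2-1$, which will pin down the order once $C$ is bounded from above.

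For the upper bound I would exploit the type grading $\Lambda=\bigcup_{i\in\bb Z_3}\Lambda_i$ and the fact (the analogue of \lemref{01}) that $N_1$ carries type $i$ to type $i+1$ and $N_2$ carries type $i$ to type $i+2$. The key reduction is to show that within each type class all classes $[e_a]$ coincide; call the common value $u_i$ for $a\in\Lambda_i$. Granting this, the two families of relations collapse, since each indexing set is a full $q^2$-element fibre of a single type: $u_i=q^2u_{i+1}$ (from $N_1$) and $u_i=q^2u_{i+2}$ (from $N_2$). Running the first relation around the $3$-cycle gives $u_0=q^6u_0$, while combining $u_0=q^2u_1$ with $u_1=q^2u_0$ gives $u_0=q^4u_0$; as $\gcd(q^6-1,q^4-1)=q^2-1$, we obtain $(q^2-1)u_0=0$. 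Since $u_1=q^2u_0$ and $u_2=q^2u_0$, the group $C$ is cyclic, generated by $u_0$, of order dividing $q^2-1$. Together with the surjection $\bar\epsilon$ this yields $C\cong\bb Z_{q^2-1}$, completing the proof.

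The hard part will be the identification of same-type generators, that is, showing $e_a-e_{a'}\in\im(I-N_1,I-N_2)$ whenever $a,a'\in\Lambda_i$. This does not follow formally from the relations, which express each $[e_a]$ only as a $q^2$-fold sum rather than equating it to a single neighbour, so it must be extracted from the geometry. I would prove it using the strong connectivity of the transition graph (the analogue of condition~(H2)) together with the projective-plane incidence relations encoded in $N_1,N_2$: because the fibres $\{b:N_j(b,a)=1\}$ are the \emph{full} sets of $q^2$ admissible extensions, two same-type reduced tiles are joined by a chain of elementary moves whose associated relations differ in a single summand, so the differences can be cancelled inductively along the chain. Making this cancellation precise—rather than merely proving finiteness of $C$, equivalently that $N_1$ and $N_2$ admit no common left eigenvector for the eigenvalue $1$—is where the real work lies.
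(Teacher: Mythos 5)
Your reduction is set up correctly and follows the paper's route: invoke \thmref{Ki} and \lemref{cokerN}, prove that $C(\Gamma)=\coker(I-N_1,\,I-N_2)$ is $\bb Z_{q^2-1}$, and conclude. Your collapse of the relations to $u_i=q^2u_{i+1}$, $u_i=q^2u_{i+2}$ and the gcd computation agree with the paper's
$\langle a_0 \mid a_0=q^4a_0,\ a_0=q^6a_0\rangle\cong\bb Z_{q^2-1}$, and your augmentation surjection $\bar\epsilon\colon C\to\bb Z_{q^2-1}$ is a legitimate (and, in your formulation, necessary) complement: since you only derive consequences of the relations rather than an exact collapsed presentation, you need the lower bound, whereas the paper identifies the collapsed presentation outright and reads off the group.

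The genuine gap is the step you yourself flag as ``the hard part'': the identification $[e_a]=[e_{a'}]$ for all $a,a'\in\Lambda_i$. This is not a technicality --- it is the entire substance of the paper's proof --- and the mechanism you sketch would not close. If the relations for $a$ and $a'$ differ in a single summand, subtraction gives only $[e_a]-[e_{a'}]=[e_b]-[e_{b'}]$ for elements $b,b'$ of a \emph{different} type class, so your proposed induction chases the problem around the three types rather than terminating; strong connectivity of the transition graph (H2) by itself is also insufficient, since irreducibility never produces equalities of generators in a cokernel. What the paper actually uses is that suitable pairs of same-type tiles have \emph{identical} relations, differing in no summand at all: writing $a=\Gamma\xi$ with $\xi\in X_i$, the right-hand side of the $N_1$-relation (\ref{shift1}) depends only on the two chambers $\xi_i$ and $\overline\xi_{i+2}$, and that of the $N_2$-relation (\ref{shift2}) only on $\xi_i$ and $\overline\xi_{i+1}$ (here regularity of the $\Gamma$-action guarantees the $q^2$ terms are distinct orbits). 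Hence two hexagons sharing $\xi_i$ and $\overline\xi_{i+2}$, or sharing $\xi_i$ and $\overline\xi_{i+1}$, are equal in $C(\Gamma)$ immediately. Given arbitrary $\xi,\vartheta\in X_i$ with the same up-chamber, the paper interpolates one auxiliary hexagon $\varphi\in X_i$ with $\varphi_i=\xi_i$, $v_{20}(\varphi)=v_{20}(\xi)$ and $v_{02}(\varphi)=v_{02}(\vartheta)$, so that $\Gamma\varphi=\Gamma\xi$ by (\ref{shift1}) and $\Gamma\varphi=\Gamma\vartheta$ by (\ref{shift2}); transitivity of $\Gamma$ on chambers then removes the same-up-chamber hypothesis and identifies all of $\Lambda_i$. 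So the correct argument is a two-step chain with exact equality of relations at each step, not an inductive cancellation of differences; supplying it is precisely what your proposal is missing.
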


\begin{proof}
Lemma \ref{cokerN} implies that, as a finitely presented abelian group, we have
\begin{equation} \label{idrelation}
C(\Gamma)=\big\langle \Lambda \, |\, a=\sum_{b\in \Lambda}N_j(b,a)b,\quad a\in \Lambda, j=1,2 \big\rangle.
\end{equation}
Fix $i\in \bb Z_3$ and $\xi\in X_i$, so that  $a=\Gamma\xi\in \Lambda_i$. Then $N_1(b,a)=1$ if and only if
$b=\Gamma\eta$ where $\eta \in X_{i+1}$,  $\eta_i=\xi_i$ and $\overline\eta_{i+1}=\overline\xi_{i+2}$.
Since $\Gamma$ acts regularly on the set of chambers,
\begin{equation}\label{shift1}
a =  \displaystyle \sum_
{
\substack{
\eta \in X_{i+1}\\
\eta_i=\xi_i \\
\overline\eta_{i+1}=\overline\xi_{i+2}
}
}
\Gamma\eta.
\end{equation}
Similarly
\begin{equation}\label{shift2}
a =  \displaystyle \sum_
{
\substack{
\zeta \in X_{i+2}\\
\zeta_i=\xi_i \\
\overline\zeta_{i+2}=\overline\xi_{i+1}
}
}
\Gamma\zeta.
\end{equation}

Equation (\ref{shift1}) implies that $a$ depends only on the chambers $\xi_i$ and $\overline\xi_{i+2}$ of $\xi$.
Equation (\ref{shift2}) implies that $a$ depends only on the chambers $\xi_i$ and $\overline\xi_{i+1}$ of $\xi$.
Therefore $a$ depends only on $\xi_i$.
More precisely, if $\vartheta\in X_i$ is any hexagon with $\vartheta_i=\xi_i$, then $\Gamma\vartheta=\Gamma\xi$
in $C(\Gamma)$. To see this, choose a hexagon $\varphi\in X_i$ with $\varphi_i=\xi_i$, $v_{02}(\varphi)=v_{02}(\vartheta)$
and $v_{20}(\varphi)=v_{20}(\xi)$. Then $\Gamma\varphi=\Gamma\xi$, by (\ref{shift1}), and $\Gamma\varphi=\Gamma\vartheta$, by (\ref{shift2}), so that $\Gamma\vartheta=\Gamma\xi$, as claimed.
However, $\Gamma$ acts transitively on chambers.
Therefore all elements of $\Lambda_i$ are equal, to $a_i$, say. Thus

\begin{equation*}
\begin{split}
  C(\Gamma) & = \langle a_0, a_1, a_2 \, | \, a_i=q^2a_{i+1}, a_i=q^2a_{i-1}, i\in\bb Z_3 \rangle  \\
   &= \langle a_0 \, | \, a_0=q^4a_0, a_0=q^6a_0 \rangle \\
   &= \langle a_0 \, | \, a_0=q^2a_0 \rangle = \bb Z_{q^2-1}.
\end{split}
\end{equation*}
In particular, since $C(\Gamma)$ is finite, it follows from Theorem \ref{Ki} that for $i=0,1$,
\begin{equation}
K_i(\fk A_\Gamma)=\bb Z_{q^2-1}.
\end{equation}
\end{proof}

Recall that the classical Cuntz algebra $\cl O_n$ is generated by $n$ isometries whose range projections sum to the identity operator.

\begin{corollary}
  The algebra $\fk A_\Gamma$ is stably isomorphic to $\cl O_{q^2}\otimes\cl O_{q^2}$.
\end{corollary}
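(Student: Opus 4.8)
The plan is to identify both $\fk A_\Gamma$ and $\cl O_{q^2}\otimes\cl O_{q^2}$ as Kirchberg algebras lying in the UCT (bootstrap) class, and then to invoke the Kirchberg--Phillips classification theorem \cite{ad}, which asserts that two such algebras are stably isomorphic precisely when their $K_0$ and $K_1$ groups are abstractly isomorphic. A useful simplification here, as against the main theorem, is that stable isomorphism imposes no constraint on the class of the unit, so matching the K-groups alone will suffice.

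First I would assemble the structural hypotheses. By Proposition~\ref{GammaD} and Theorem~\ref{mainck}, $\fk A_\Gamma\cong\cl A_D$ is separable, nuclear, purely infinite and simple; being a rank $2$ Cuntz--Krieger algebra it lies in the bootstrap class, so the UCT applies. On the other side, each Cuntz algebra $\cl O_{q^2}$ is a unital Kirchberg algebra in the bootstrap class, and the minimal tensor product of two such algebras is again separable, nuclear, purely infinite and simple, and remains in the bootstrap class. Thus both algebras satisfy the hypotheses of \cite{ad}.

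Next I would compute the K-theory of the tensor product. Since $K_0(\cl O_{q^2})=\bb Z_{q^2-1}$ and $K_1(\cl O_{q^2})=0$, the Künneth theorem for C*-algebras produces a short exact sequence whose tensor term contributes $\bb Z_{q^2-1}\otimes\bb Z_{q^2-1}\cong\bb Z_{q^2-1}$ to $K_0$, while the degree-shifted Tor term contributes $\operatorname{Tor}(\bb Z_{q^2-1},\bb Z_{q^2-1})\cong\bb Z_{q^2-1}$ to $K_1$; the remaining summands vanish because $K_1(\cl O_{q^2})=0$. Hence
\[
K_0(\cl O_{q^2}\otimes\cl O_{q^2})\cong K_1(\cl O_{q^2}\otimes\cl O_{q^2})\cong \bb Z_{q^2-1}.
\]
By Proposition~\ref{basicK}, $\fk A_\Gamma$ has exactly these same K-groups in both degrees.

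Finally, with isomorphisms $K_0(\fk A_\Gamma)\cong K_0(\cl O_{q^2}\otimes\cl O_{q^2})$ and $K_1(\fk A_\Gamma)\cong K_1(\cl O_{q^2}\otimes\cl O_{q^2})$ in hand, the Kirchberg--Phillips theorem delivers the asserted stable isomorphism. The step I expect to require the most care is the K-theory of the tensor product: one must apply the Künneth sequence correctly and, in particular, recognise that it is the \emph{Tor} term that supplies the nontrivial $K_1$, so that the degree-shifted contribution of $\operatorname{Tor}(\bb Z_{q^2-1},\bb Z_{q^2-1})$ is not overlooked. Verifying bootstrap-class membership of $\cl A_D$ is the only other point needing attention, and it follows from its description as a rank $2$ Cuntz--Krieger algebra.
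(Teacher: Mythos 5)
Your proposal is correct and follows essentially the same route as the paper: compute $K_*(\cl O_{q^2}\otimes\cl O_{q^2})$ via the K\"unneth theorem of \cite[Theorem 23.1.3]{bl}, match it against Proposition~\ref{basicK}, and apply the Kirchberg--Phillips classification to the p.i.s.u.n.\ algebras satisfying the U.C.T. Your write-up merely makes explicit two points the paper leaves implicit, namely that the nontrivial $K_1$ arises from the degree-shifted $\operatorname{Tor}$ term and that stable isomorphism imposes no condition on the class of the unit.
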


\begin{proof}
$K_0(\cl O_n)= \bb Z_{n-1}$ and $K_1(\cl O_n)= 0$ \cite{ca}.
The K\"unneth Theorem for tensor products \cite[Theorem 23.1.3]{bl} shows that $K_i(\cl O_{q^2}\otimes\cl O_{q^2})= \bb Z_{q^2-1}$, $i=0,1$.
Since the algebras involved are p.i.s.u.n. and satisfy the U.C.T. \cite[Remark 6.5]{rs2}, the result follows from the classification theorem for such algebras \cite{k}.
\end{proof}

\begin{remark}\label{tensor}
In order to determine the isomorphism class of $\fk A_\Gamma = C(\Omega) \rtimes \Gamma$ we need to use that fact that it is
classified (in the class of p.i.s.u.n. $C^*$-algebras satisfying the U.C.T.) by the invariants $(K_0(\fk A_\Gamma), [\1], K_1(\fk A_\Gamma))$ as abelian groups with distinguished element in $K_0$ \cite{k}.
It turns out that  $\fk A_\Gamma$ is not isomorphic to $\cl O_{q^2}\otimes\cl O_{q^2}$, but to
$M_r  \otimes\cl O_{q^2}\otimes\cl O_{q^2}$, for some $r$. In order to determine $r$, we use the fact that,
for any $C^*$-algebra $\fk C$, the identity matrix $I_{M_r \otimes\fk C}$ is a direct sum of $r$ copies of $I_\fk C$, so that the class of the identity element in $K_0$ satisfies $[\1_{M_r \otimes\fk C}]=r\cdot[\1_\fk C]$.
Now, for any decorating set $E$, the $C^*$-algebra $\cl A_E$ is stably isomorphic to $\cl A_D$ \cite [Corollary 5.15]{rs2} and so
$K_i(\cl A_E)=\bb Z_{q^2-1}$. Since $q^2\cdot[\1]=[\1]$ in $K_0$, it follows from the classification theorem that
$M_{q^2}  \otimes \cl A_E \cong \cl A_E$.
\end{remark}

\begin{lemma}\label{mk}
Let $k=(q+1)(q^2+q+1)$, the number of chambers of $\Delta$ which contain the vertex $O$.
\begin{itemize}
\item   The decoration $\delta : D \to A$, defined by $\delta (\tau) = \Gamma\tau$, is equivalent to the decoration $\delta_0': A_0 \times \{1,2,\dots ,k\}\to A$, defined by
$\delta'((a_0,i)) = a_0$.
\item  $\cl A_D \cong \cl A_{A_0 \times \{1,2,\dots ,k\}}$.
\end{itemize}
\end{lemma}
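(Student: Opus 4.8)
The plan is to produce an equivalence of decorations and then quote Remark~\ref{decorate}. Every tile in $D$ has its base vertex equal to the type-$0$ vertex $O$, so $\delta(D)\subseteq A_0$; likewise $\delta_0'$ takes values in $A_0$. By definition, an equivalence is a bijection $\eta:D\to A_0\times\{1,\dots,k\}$ with $\delta=\delta_0'\circ\eta$, and such an $\eta$ must carry each fibre $\delta^{-1}(a)$ bijectively onto ${\delta_0'}^{-1}(a)$. Over $A_1\cup A_2$ both fibres are empty, while over each $a_0\in A_0$ the fibre of $\delta_0'$ is $\{a_0\}\times\{1,\dots,k\}$, of cardinality $k$. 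Hence the whole statement reduces to showing that $\#\,\delta^{-1}(a_0)=k$ for every $a_0\in A_0$; granting this, one assembles $\eta$ by choosing an arbitrary bijection $\delta^{-1}(a_0)\to\{a_0\}\times\{1,\dots,k\}$ for each $a_0$, and the second bullet follows at once from Remark~\ref{decorate}.

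To count the fibre, fix a representative tile $\tau_0$ of the orbit $a_0$, with base vertex $v=v_{00}(\tau_0)$ of type $0$. Since $\Gamma$ is type preserving, $\gamma\tau_0$ has base vertex $\gamma v$, so $\gamma\tau_0\in D$ exactly when $\gamma v=O$; thus $\delta^{-1}(a_0)=\{\gamma\tau_0:\gamma v=O\}$. The assignment $\gamma\mapsto\gamma\tau_0$ is injective on $\{\gamma:\gamma v=O\}$: if $\gamma_1\tau_0=\gamma_2\tau_0$ with $\gamma_1 v=\gamma_2 v=O$, then $g=\gamma_2^{-1}\gamma_1$ fixes $v$ and stabilises $\tau_0$; being type preserving it preserves both the type and the distance from $v$ of each vertex of $\tau_0$, data which determine the vertices of $\tau_0$ uniquely, so $g$ fixes a chamber of $\tau_0$ and hence is trivial by freeness of the action on chambers. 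Since $\Gamma$ is transitive on type-$0$ vertices, $\{\gamma:\gamma v=O\}$ is a nonempty left coset of the stabiliser $\Gamma_v$, giving $\#\,\delta^{-1}(a_0)=\#\Gamma_v=\#\Gamma_O$ (the last equality because $\Gamma_v$ and $\Gamma_O$ are conjugate).

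It remains to prove the crux, namely $\#\Gamma_O=k$, where $k=(q+1)(q^2+q+1)$ is the number of chambers through $O$. I would show that $\Gamma_O$ acts regularly on these $k$ chambers. The action is free, being the restriction of the free action of $\Gamma$ on all chambers. For transitivity, let $c_1,c_2$ be chambers containing $O$; transitivity of $\Gamma$ gives $\gamma$ with $\gamma c_1=c_2$, and since $O$ is the type-$0$ vertex of both $c_1$ and $c_2$ while $\gamma$ is type preserving, necessarily $\gamma O=O$, i.e.\ $\gamma\in\Gamma_O$. A free transitive action on a $k$-element set forces $\#\Gamma_O=k$, which completes the fibre count, hence the equivalence of $\delta$ with $\delta_0'$ and, by Remark~\ref{decorate}, the isomorphism $\cl A_D\cong\cl A_{A_0\times\{1,\dots,k\}}$. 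The one genuine obstacle is the transitivity of $\Gamma_O$ on the chambers through $O$; it dissolves once one observes that type preservation forces any element carrying one such chamber to another to fix their common type-$0$ vertex $O$.
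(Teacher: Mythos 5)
Your proof is correct, but it reaches the equivalence of decorations by a different mechanism than the paper. The paper's proof is a direct construction with no counting and no arbitrary choices: letting $c_1,\dots,c_k$ be the chambers containing $O$ and $D_i\subseteq D$ the set of tiles with initial chamber $c_i$, regularity of the $\Gamma$-action on chambers makes each restriction $\delta_i=\delta|_{D_i}:D_i\to A_0$ a bijection (freeness gives injectivity, transitivity gives surjectivity), and one simply sets $\eta(\tau)=(\delta_i(\tau),i)$ for $\tau\in D_i$; this $\eta$ is the required equivalence, with the second coordinate carrying geometric meaning (the initial chamber). You instead count fibres: you identify $\delta^{-1}(a_0)$ with the coset $\{\gamma:\gamma v=O\}$ of the vertex stabiliser $\Gamma_v$ via the orbit map $\gamma\mapsto\gamma\tau_0$ (whose injectivity you check by the type-plus-distance argument, which does work, since $(k-l \bmod 3,\,k+l)$ determines the vertex $v_{kl}$ of a tile), then prove $\#\Gamma_O=k$ by exhibiting a simply transitive action of $\Gamma_O$ on the chambers through $O$, and finally assemble $\eta$ from arbitrary bijections of fibres. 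The three verifications your route requires (injectivity of the orbit map, conjugacy of stabilisers, regularity of $\Gamma_O$ on the chambers through $O$) are all absorbed in the paper into the single observation that $\delta_i$ is bijective, so the paper's argument is shorter and canonical; what your route buys is an intermediate fact the paper never states, namely that the stabiliser of a vertex has order exactly $k=(q+1)(q^2+q+1)$ and acts regularly on the chambers through that vertex, which is the conceptual reason the multiplicity of each $a_0\in A_0$ in $D$ is $k$. Both arguments rest on the same hypothesis (free and transitive action on chambers) and both conclude the second bullet by quoting Remark~\ref{decorate}.
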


\begin{proof}
Let $c_1, \dots , c_k$ be the chambers in $\Delta$ based at $O$.
For each $i$, let $D_i$ denote the set of tiles $\tau\in D$ which have initial chamber $c_i$.
Since $\Gamma$ acts freely and transitively on the set of chambers of $\Delta$, the restriction $\delta_i$
of $\delta$ to $D_i$ is a bijection onto $A_0$.
Define $\eta : D \to A_0 \times \{1,2,\dots ,k\}$ by $\eta (\tau)=(\delta_i(\tau), i)$ for $\tau\in D_i$,
$i= 1, 2, \dots, k$. Then $\eta$ is bijective and $\delta_0'\eta=\delta$. Thus $\delta$ and $\delta_0'$ are equivalent decorations.

The fact that $\cl A_D \cong \cl A_{A_0 \times \{1,2,\dots ,k\}}$ follows immediately from Remark \ref{decorate}.
\end{proof}

\begin{lemma}\label{A0}
  $\fk A_\Gamma \cong  M_{3(q+1)}  \otimes \cl A_{A_0}$.
\end{lemma}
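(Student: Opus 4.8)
The plan is to exhibit $\fk A_\Gamma$ as a matrix amplification $M_k\otimes\cl A_{A_0}$ for the explicit integer $k=(q+1)(q^2+q+1)$, and then to trade the matrix size $k$ for $3(q+1)$ by means of the $K$-theoretic classification. First I would chain together the isomorphisms already at hand: Proposition~\ref{GammaD} gives $\fk A_\Gamma\cong\cl A_D$, and Lemma~\ref{mk} gives $\cl A_D\cong\cl A_{A_0\times\{1,\dots,k\}}$, where $k=(q+1)(q^2+q+1)$ is the number of chambers containing $O$. Regarding $A_0$ as a decorating set via the inclusion $A_0\hookrightarrow A$ and applying Lemma~\ref{decorate*} with $r=k$ then yields $\cl A_{A_0\times\{1,\dots,k\}}\cong M_k\otimes\cl A_{A_0}$. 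Combining these isomorphisms gives $\fk A_\Gamma\cong M_k\otimes\cl A_{A_0}$.

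It then remains to show $M_k\otimes\cl A_{A_0}\cong M_{3(q+1)}\otimes\cl A_{A_0}$, and here I would appeal to the classification of p.i.s.u.n. algebras satisfying the U.C.T. by the triple $(K_0,[\1],K_1)$ recalled in Remark~\ref{tensor} (note that both amplifications are p.i.s.u.n. and satisfy the U.C.T., since $\cl A_{A_0}$ is and tensoring with a matrix algebra preserves these properties). By Remark~\ref{tensor} one has $K_i(\cl A_{A_0})=\bb Z_{q^2-1}$ for $i=0,1$, so both amplifications have $K_0=K_1=\bb Z_{q^2-1}$, and the only invariant that could distinguish them is the class of the identity, which satisfies $[\1_{M_r\otimes\cl A_{A_0}}]=r\,[\1_{\cl A_{A_0}}]$. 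The decisive arithmetic point is that
$$k-3(q+1)=(q+1)\bigl((q^2+q+1)-3\bigr)=(q+1)(q+2)(q-1)=(q+2)(q^2-1),$$
which is divisible by $q^2-1$; hence $k\equiv 3(q+1)\pmod{q^2-1}$, and therefore $k\,[\1_{\cl A_{A_0}}]=3(q+1)\,[\1_{\cl A_{A_0}}]$ in $\bb Z_{q^2-1}$. The two amplifications thus have identical classifying data and are isomorphic, giving $\fk A_\Gamma\cong M_{3(q+1)}\otimes\cl A_{A_0}$.

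The substantive content is almost entirely carried by the already-proved ingredients, so the concatenation itself is routine. The one place requiring care is the identity-class bookkeeping: I would emphasize that the actual value of $[\1_{\cl A_{A_0}}]\in\bb Z_{q^2-1}$ never needs to be computed, because the congruence $k\equiv 3(q+1)$ holds modulo the \emph{full} order $q^2-1$ of $K_0$, so scaling by $k$ and by $3(q+1)$ have the same effect on \emph{every} element of the group. I expect this to be the only subtle step; everything else is a direct application of Proposition~\ref{GammaD}, Lemma~\ref{mk}, Lemma~\ref{decorate*}, and Remark~\ref{tensor}.
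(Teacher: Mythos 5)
Your proposal is correct and follows essentially the same route as the paper: the identical chain $\fk A_\Gamma \cong \cl A_D \cong \cl A_{A_0\times\{1,2,\dots,k\}} \cong M_k\otimes\cl A_{A_0}$ via Proposition~\ref{GammaD}, Lemma~\ref{mk} and Lemma~\ref{decorate*}, followed by trading $M_k$ for $M_{3(q+1)}$ using the congruence $k\equiv 3(q+1)\pmod{q^2-1}$ together with the classification argument recorded in Remark~\ref{tensor}. Your write-up simply makes explicit the identity-class bookkeeping and the arithmetic $k-3(q+1)=(q+2)(q^2-1)$ that the paper compresses into a citation of Remark~\ref{tensor} and the phrase ``since $k\equiv 3(q+1)\pmod{q^2-1}$''.
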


\begin{proof}
  By Proposition \ref{GammaD}, Lemma \ref{mk}, Lemma \ref{decorate*}, and  Remark \ref{tensor},
  $$\fk A_\Gamma \cong \cl A_D \cong \cl A_{A_0 \times \{1,2,\dots ,k\}}\cong M_k\otimes \cl A_{A_0}\cong  M_{3(q+1)}  \otimes \cl A_{A_0},$$
  since $k \equiv 3(q+1) \pmod {q^2-1}$.
\end{proof}

\begin{lemma}\label{3times}
  $\cl A_A \cong M_3  \otimes \cl A_{A_0}$.
\end{lemma}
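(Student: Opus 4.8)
The plan is to realize $\cl A_A$ as a $3\times 3$ matrix algebra over one of its corners and to identify that corner with $\cl A_{A_0}$. Recall that $\cl A_A$ is the algebra $\cl A_E$ associated with the full alphabet as decorating set, $E=A$ with $\delta=\mathrm{id}_A$, so its generators are the $s_{u,v}$ with $u,v\in W$ and $t(u)=t(v)$, and it is purely infinite, simple, unital and nuclear by Theorem \ref{mainck}. Set $p_i=\sum_{a\in A_i}s_{a,a}$ for $i\in\bb Z_3$. Since $A=A_0\sqcup A_1\sqcup A_2$ and $\sum_{a\in A}s_{a,a}=\1$, these are mutually orthogonal projections with $p_0+p_1+p_2=\1$.

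First I would identify the corner $p_0\cl A_A p_0$. Using relation (\ref{rel1b*}) one checks that $s_{o(u),o(u)}s_{u,u}=s_{u,u}$ while $s_{a,a}s_{u,u}=0$ when $a\in A$ and $o(u)\ne a$; hence $p_0s_{u,v}p_0=s_{u,v}$ if $o(u),o(v)\in A_0$ and $p_0s_{u,v}p_0=0$ otherwise. Thus $p_0\cl A_A p_0$ is densely spanned by $\{s_{u,v}:o(u),o(v)\in A_0,\ t(u)=t(v)\}$. These are precisely the generators attached to the decoration $A_0\hookrightarrow A$, and they inherit relations (\ref{rel1*}) from $\cl A_A$: relation (\ref{rel1c*}) preserves the origin of a word and so keeps one inside the corner, and relation (\ref{rel1d*}) holds for distinct letters of $A_0$. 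Since the corner is nontrivial, the uniqueness assertion of Theorem \ref{mainck} gives $p_0\cl A_A p_0\cong\cl A_{A_0}$.

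The crux is to prove that $p_0$, $p_1$, $p_2$ are Murray--von Neumann equivalent. For $a\in A_0$, relation (\ref{rel1c*}) with $\varepsilon=(1,0)$ yields $s_{a,a}=\sum_b s_{w_{b,a},w_{b,a}}$, where $w_{b,a}$ is the word of shape $(1,0)$ from $a$ to $b$ and the sum runs over the $b\in A_1$ with $M_1(b,a)=1$; for each such pair the generator $s_{b,w_{b,a}}$ implements $s_{w_{b,a},w_{b,a}}\sim s_{b,b}$ via relation (\ref{rel1b*}). By Lemma \ref{01} each $b\in A_1$ arises from exactly $q^2$ letters $a\in A_0$, so summing the resulting $K_0$-equalities gives $[p_0]=q^2[p_1]$, and likewise $[p_1]=q^2[p_2]$ and $[p_2]=q^2[p_0]$. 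Since $K_0(\cl A_A)=\bb Z_{q^2-1}$ by Remark \ref{tensor} and $q^2\equiv 1\pmod{q^2-1}$, we conclude $[p_0]=[p_1]=[p_2]$; pure infiniteness and simplicity of $\cl A_A$ then upgrade equal $K_0$-classes to $p_0\sim p_1\sim p_2$.

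Finally, choosing partial isometries $v_i$ with $v_i^*v_i=p_0$ and $v_iv_i^*=p_i$ (take $v_0=p_0$), the elements $v_iv_j^*$ form a system of matrix units summing to $\1$, giving the standard corner-to-matrix isomorphism $\cl A_A\cong M_3\otimes p_0\cl A_A p_0\cong M_3\otimes\cl A_{A_0}$. The main obstacle is the third step: establishing $[p_0]=[p_1]$ depends on the exact count of $q^2$ preimages under $M_1$ from Lemma \ref{01} together with the arithmetic identity $q^2\equiv 1\pmod{q^2-1}$, after which moving from equal $K_0$-classes to genuine equivalence of projections uses only that the algebra is purely infinite and simple.
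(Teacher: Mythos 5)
Your proof is correct and follows essentially the same route as the paper: the same projections $\sum_{a\in A_i}s_{a,a}$ (called $e_i$ there), the same identification of the corners $e_i\cl A_A e_i\cong\cl A_{A_i}$ via the uniqueness statement of Theorem \ref{mainck}, the same use of relation (\ref{rel1c*}) together with the counting in Lemma \ref{01} to obtain $[p_i]=q^2[p_{i+1}]$ in $K_0$, and the same appeal to pure infiniteness and simplicity to convert equal $K_0$-classes into Murray--von Neumann equivalence of $p_0,p_1,p_2$. The single point of divergence is how you pass from $[p_0]=q^2[p_1]$ to $[p_0]=[p_1]$. The paper stays self-contained: it derives both $[e_i]=q^2[e_{i+1}]$ (from $M_1$) and $[e_i]=q^2[e_{i-1}]$ (from $M_2$), deduces $[e_i]=q^4[e_i]=q^6[e_i]$, and hence (since $\gcd(q^4-1,q^6-1)=q^2-1$) that $[e_i]=q^2[e_i]$, which needs no prior knowledge of the group $K_0(\cl A_A)$. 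You instead quote $K_0(\cl A_A)\cong\bb Z_{q^2-1}$ from Remark \ref{tensor}, so that every class is annihilated by $q^2-1$ and $q^2[p_1]=[p_1]$. That shortcut is legitimate and non-circular---Remark \ref{tensor} rests on Proposition \ref{basicK}, Proposition \ref{GammaD} and stable isomorphism invariance of K-theory, all established before this lemma---and it buys a small simplification: the relations coming from $M_2$ are never needed. The cost is that the lemma then logically depends on the K-theory computation, whereas the paper's version of this step would stand even without Proposition \ref{basicK}. Your final step, assembling matrix units $v_iv_j^*$ to get $\cl A_A\cong M_3\otimes p_0\cl A_A p_0$, is the standard argument the paper leaves implicit.
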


\begin{proof}

For $i=0,1,2$, let $e_i=\sum_{a\in A_i}s_{a,a}$, so that $e_0+e_1+e_2=\1$.
It is an immediate consequence of relations (\ref{rel1b*}), (\ref{rel1d*}) that the generators of $\cl A_A$ satisfy
\begin{equation*}
e_is_{u,v}e_i=
\begin{cases}
    s_{u,v} &  \text{if $o(u), o(v) \in A_i$},\\
    0 &  \text{otherwise}.
  \end{cases}
\end{equation*}
It follows immediately from the uniqueness statement of Theorem \ref{mainck} that
\begin{equation}\label{Ai}
  e_i\cl A_A e_i \cong \cl A_{A_i}.
\end{equation}

The relations (\ref{rel1*}) show that if $M_j(b,a)=1$ then the partial isometry
$s_{b,ab}$ has initial projection $s_{ab,ab}$ and final projection $s_{b,b}$.
Thus $[s_{ab,ab}]=[s_{b,b}]$. It follows from (\ref{rel1c*}) and Lemma \ref{01} that, if $a\in A_i$ then
$[s_{a,a}] = \sum_{b\in A_{i+1}}M_1(b,a)[s_{b,b}]$. Therefore
\begin{equation*}
\begin{split}
[e_i] &  = \sum_{a\in A_i}[s_{a,a}] =  \sum_{a\in A_i}\sum_{b\in A_{i+1}}M_1(b,a)[s_{b,b}]\\
 & =  \sum_{b\in A_{i+1}}\large\left(\sum_{a\in A_i}M_1(b,a)\large\right)[s_{b,b}]
 = q^2[e_{i+1}]
\end{split}
\end{equation*}
since $\sum_{a\in A_i}M_1(b,a)=q^2$ for each $b\in A_{i+1}$, by Lemma \ref{01}.
Similarly (replacing $M_1$ by $M_2$), $[e_i]=q^2[e_{i-1}]$.
It follows that $[e_i]=q^4[e_i]=q^6[e_i]$. Therefore $[e_i]=q^2[e_i]$ and $[e_i]=[e_{i+1}]$, for all $i\in \bb Z/3\bb Z$.
Thus $[e_1]=[e_2]=[e_3]$ in $K_0(\cl A_A)$.
In view of the description of the K-theory of purely infinite $C^*$-algebras in \cite[Section 1]{ca},
this implies that the projections $e_0, e_1, e_2$ are Murray-von Neumann equivalent in $\cl A_A$.
It follows from (\ref{Ai}) that $\cl A_{A_0}\cong \cl A_{A_1}\cong \cl A_{A_2}$ and that
$\cl A_A \cong M_3  \otimes \cl A_{A_0}$.
\end{proof}

\begin{lemma}\label{basic}
  $(K_0(\cl A_A), [\1]) \cong (\bb Z_{q^2-1}, 3q)$.
\end{lemma}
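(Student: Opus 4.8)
The plan is to locate the class $[\1]$ inside $K_0(\cl A_A)\cong\bb Z_{q^2-1}$ by reducing it to a count of diagonal projections and then identifying a single such projection with a generator. First I would recall, from Proposition~\ref{basicK} together with Theorem~\ref{Ki}, that $K_0(\cl A_A)$ is the finite cyclic group $C(\Gamma)=\bb Z_{q^2-1}$; since $C(\Gamma)$ has rank $r=0$, the formula of Theorem~\ref{Ki} gives $K_0(\cl A_A)=C(\Gamma)$ outright. The standard description of the K-theory of a rank 2 Cuntz--Krieger algebra from \cite{rs3} identifies the class $[s_{a,a}]$ of the diagonal projection attached to $a\in A$ with the image of the generator $a$ under the quotient map $\bb Z^A\to C(\Gamma)=\coker\begin{smallmatrix}(I-M_1,&I-M_2)\end{smallmatrix}$.

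The crux is to show that for $a\in A_0$ this image is in fact a generator of $\bb Z_{q^2-1}$, not a proper multiple. For this I would track a single tile-orbit through the two-step reduction of Lemma~\ref{cokerN}, as displayed in Figure~\ref{reduction}: under the resulting identification $C(\Gamma)=\coker\begin{smallmatrix}(I-N_1,&I-N_2)\end{smallmatrix}$, the class of $a=\Gamma\tau$ is carried to the class of the orbit $\Gamma\xi$ of the reduced tile $\xi\subset\tau$. By the computation in the proof of Proposition~\ref{basicK}, every element of $\Lambda_0$ represents the generator $a_0$ of $\bb Z_{q^2-1}$. Hence $[s_{a,a}]=a_0$ for each $a\in A_0$, and we are free to fix the isomorphism $K_0(\cl A_A)\cong\bb Z_{q^2-1}$ so that this common class $a_0$ corresponds to $1\in\bb Z_{q^2-1}$.

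With this normalization in place the remaining step is purely arithmetic. Recall from Section~\ref{K-theory} that $A=A_0\sqcup A_1\sqcup A_2$ with $\#A=3q^5$, so $\#A_0=q^5$. Therefore
$$[e_0]=\sum_{a\in A_0}[s_{a,a}]=q^5\cdot 1=q \quad\text{in }\bb Z_{q^2-1},$$
using $q^5\equiv q\pmod{q^2-1}$. By Lemma~\ref{3times} the projections $e_0,e_1,e_2$ are Murray--von Neumann equivalent, so $[e_0]=[e_1]=[e_2]$ and
$$[\1]=[e_0]+[e_1]+[e_2]=3[e_0]=3q \quad\text{in }\bb Z_{q^2-1},$$
which is the assertion of the lemma.

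I expect the main obstacle to be the middle paragraph: verifying cleanly that the K-theory isomorphism of \cite{rs3} sends $[s_{a,a}]$ to the class of $a$, and that tracking $a$ through the reduction from $(M_1,M_2)$ to $(N_1,N_2)$ lands exactly on the generator $a_0$ rather than on some multiple of it. Everything after that is bookkeeping with the counts $\#A_i=q^5$, the congruence $q^5\equiv q$, and the equivalence of the $e_i$ already supplied by Lemma~\ref{3times}.
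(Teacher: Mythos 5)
Your proposal is correct and takes essentially the same route as the paper: both rest on the identification, via the proof of \cite[Proposition 8.3]{rs3}, of the classes $[s_{a,a}]$ with the generators $a$ of $C(\Gamma)=\coker(I-M_1,\,I-M_2)$, on the fact that under the hexagon reduction of Lemma~\ref{cokerN} every tile class becomes the generator of $\bb Z_{q^2-1}$ (a step the paper leaves implicit and you rightly flag as the crux), and on counting modulo $q^2-1$. The only difference is cosmetic: the paper sums over all of $A$ at once ($[\1]\mapsto\sum_{a\in A}a$ with $\# A=3q^5\equiv 3q$), whereas you compute $[e_0]=q^5\equiv q$ for the type-$0$ tiles and then triple it using the Murray--von Neumann equivalence of $e_0,e_1,e_2$ from Lemma~\ref{3times}, which is a harmless (non-circular, since that lemma precedes this one and does not use it) but unnecessary detour.
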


\begin{proof}
Since $\cl A_A$ is stably isomorphic to $\fk A_\Gamma$, it follows from Proposition \ref{basicK} that
 $K_0(\cl A_A)\cong K_0(\fk A_\Gamma) \cong C(\Gamma) \cong \bb Z_{q^2-1}$.
By the proof of \cite [Proposition 8.3]{rs3},
the isomorphism between $K_0(\cl A_A)$ and the group
\begin{equation}\label{star}
\big\langle A \, |\, a=\sum_{b\in A}M_j(b,a)b,\quad a\in A, j=1,2 \big\rangle.
\end{equation}
maps $[\1]$ to the element $\sum_{a\in A} a$.
However $\# A=3q^5 \equiv 3q \pmod {q^2-1}$. The result follows.
\end{proof}

\begin{theorem}
  $(K_0(\fk A_\Gamma), [\1], K_1(\fk A_\Gamma))\cong (\bb Z_{q^2-1},3(q+1) ,\bb Z_{q^2-1})$.
\end{theorem}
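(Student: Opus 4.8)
The plan is to assemble the three invariants $(K_0, [\1], K_1)$ by combining the results already established in the paper. The K-theory groups themselves are immediate: Proposition~\ref{basicK} gives $K_0(\fk A_\Gamma) = K_1(\fk A_\Gamma) = \bb Z_{q^2-1}$, so the only nontrivial work is pinning down the class of the identity $[\1]$ in $K_0(\fk A_\Gamma) \cong \bb Z_{q^2-1}$. The strategy is to relate $\fk A_\Gamma$ to the auxiliary algebra $\cl A_{A_0}$ via the matrix amplifications established in Lemmas~\ref{A0} and~\ref{3times}, and to track how the class of the identity transforms under each tensor-product isomorphism using the rule $[\1_{M_r \otimes \fk C}] = r \cdot [\1_\fk C]$ from Remark~\ref{tensor}.

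First I would record, from Lemma~\ref{basic}, that $(K_0(\cl A_A), [\1]) \cong (\bb Z_{q^2-1}, 3q)$, and from Lemma~\ref{3times} that $\cl A_A \cong M_3 \otimes \cl A_{A_0}$. Applying the identity-class rule to this latter isomorphism gives $3q = [\1_{\cl A_A}] = 3 \cdot [\1_{\cl A_{A_0}}]$ in $K_0(\cl A_{A_0}) \cong \bb Z_{q^2-1}$; one must be careful here, since division by $3$ is not unique in $\bb Z_{q^2-1}$, so it is cleaner to argue forward rather than solve for $[\1_{\cl A_{A_0}}]$. The clean route is to start from Lemma~\ref{A0}, which gives $\fk A_\Gamma \cong M_{3(q+1)} \otimes \cl A_{A_0}$, and combine it with Lemma~\ref{3times}: writing $\cl A_A \cong M_3 \otimes \cl A_{A_0}$ lets me express $\cl A_{A_0}$ inside the matrix amplification, and then
\begin{equation*}
[\1_{\fk A_\Gamma}] = 3(q+1) \cdot [\1_{\cl A_{A_0}}], \qquad [\1_{\cl A_A}] = 3 \cdot [\1_{\cl A_{A_0}}] = 3q.
\end{equation*}
From the second equation I can extract $(q+1)\cdot(3\,[\1_{\cl A_{A_0}}]) = (q+1)\cdot 3q$, whence $[\1_{\fk A_\Gamma}] = (q+1)\cdot 3q = 3q^2 + 3q \equiv 3 + 3q = 3(q+1) \pmod{q^2-1}$, using $3q^2 \equiv 3 \pmod{q^2-1}$.

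The main obstacle I anticipate is the bookkeeping in passing $[\1_{\cl A_{A_0}}]$ through two separate matrix amplifications of different sizes ($M_3$ versus $M_{3(q+1)}$) without ever isolating $[\1_{\cl A_{A_0}}]$ itself, since that element need not be uniquely determined by the relation $3\,[\1_{\cl A_{A_0}}] = 3q$ in the torsion group $\bb Z_{q^2-1}$. The resolution is precisely to work with the multiples that \emph{are} determined: I multiply the known relation $3\,[\1_{\cl A_{A_0}}]=3q$ by $(q+1)$ to obtain $[\1_{\fk A_\Gamma}]$ directly, bypassing the ambiguity. Once the arithmetic congruence $3q(q+1)\equiv 3(q+1)\pmod{q^2-1}$ is checked, combining this with $K_0(\fk A_\Gamma) = K_1(\fk A_\Gamma) = \bb Z_{q^2-1}$ from Proposition~\ref{basicK} yields the claimed triple $(\bb Z_{q^2-1}, 3(q+1), \bb Z_{q^2-1})$, completing the proof.
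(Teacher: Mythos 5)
Your proposal is correct and is essentially the paper's own proof: both arguments rest on Lemma~\ref{A0}, Lemma~\ref{3times}, Lemma~\ref{basic} and Proposition~\ref{basicK}, and both conclude via the same congruence $3q(q+1)\equiv 3(q+1)\pmod{q^2-1}$. The only difference is bookkeeping: the paper regroups the matrix factors at the algebra level, writing $\fk A_\Gamma\cong M_{q+1}\otimes M_3\otimes\cl A_{A_0}\cong M_{q+1}\otimes\cl A_A$ and reading off $[\1]=(q+1)\cdot 3q$ directly, while you carry out the equivalent step inside $K_0$ by multiplying the determined quantity $3\,[\1_{\cl A_{A_0}}]=3q$ by $q+1$, which, as you observe, avoids any ill-defined division by $3$ in $\bb Z_{q^2-1}$.
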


\begin{proof}
By Lemmas \ref {A0} and \ref{3times},
$$\fk A_\Gamma \cong  M_{3(q+1)}  \otimes \cl A_{A_0} \cong  M_{q+1} \otimes M_3 \otimes \cl A_{A_0}
  \cong  M_{q+1} \otimes \cl A_A.$$
  It follows from Lemma \ref{basic} that $[\1_\Gamma]=(q+1)\cdot 3q = 3(q+1)$ in $\bb Z_{q^2-1}$.
\end{proof}

\noindent In view of the classification theorem \cite{k} and Remark \ref{tensor}, this completes the proof of Theorem \ref{mainboundaryaction}.

\begin{remark}
  The order of $[\1]$ in $K_0(\fk A_\Gamma)$ is
  \begin{equation*}
\begin{split}
\frac{q^2-1}{(q^2-1,3(q+1))}& = \frac{q-1}{(q-1,3)}\\
 & =
\begin{cases}
    q-1&  \text{if $q \not\equiv 1 \pmod {3}$},\\
    \frac{q-1}{3} &  \text{if $q  \equiv 1 \pmod {3}$}.
  \end{cases}
\end{split}
\end{equation*}
Compare this with the conjecture in \cite[Remark 8.4]{rs3}.
\end{remark}

There are other ways of expressing $\fk A_\Gamma$ in terms of rank-1 Cuntz-Krieger algebras, in addition to that of Theorem \ref{mainboundaryaction}. Here is an example. Let $\cl A_0=\cl O_A$, where
\begin{equation*}
A=
\begin{pmatrix}
1&0&1\\
0&1&1\\
1&1&0
\end{pmatrix}.
\end{equation*}
Then $\cl A_0$ is a simple rank-1 Cuntz-Krieger algebra  such that $K_*(\cl A_0)=(\mathbb Z,\mathbb Z)$.

\begin{proposition}
Under the hypotheses of Theorem \ref{mainboundaryaction}, $\fk A_\Gamma$ is stably isomorphic to $\cl A_0 \otimes \cl O_{q^2}$.
\end{proposition}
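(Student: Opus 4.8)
The plan is to mirror the proof of the earlier Corollary (that $\fk A_\Gamma$ is stably isomorphic to $\cl O_{q^2}\otimes\cl O_{q^2}$), applying the Kirchberg--Phillips classification theorem \cite{k} in its stable, non-unital form. In that form two purely infinite, simple, separable, nuclear $C^*$-algebras satisfying the U.C.T. are stably isomorphic precisely when their $\bb Z/2$-graded K-theory groups agree, and the class of the unit plays no role. So it suffices to check that $\cl A_0\otimes\cl O_{q^2}$ lies in the scope of the theorem and has the same K-theory as $\fk A_\Gamma$. First I would record that $\cl A_0\otimes\cl O_{q^2}$ is purely infinite, simple, separable and nuclear, and satisfies the U.C.T.: both $\cl A_0$ and $\cl O_{q^2}$ are simple nuclear Cuntz--Krieger algebras in the bootstrap class, and each of these properties passes to the minimal tensor product of two such algebras. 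Since $\fk A_\Gamma$ has the same properties by \cite[Remark 6.5]{rs2}, both algebras are covered by \cite{k}.

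Next I would compute $K_*(\cl A_0\otimes\cl O_{q^2})$ using the K\"unneth theorem for tensor products \cite[Theorem 23.1.3]{bl}, starting from the given values $K_*(\cl A_0)=(\bb Z,\bb Z)$ together with $K_0(\cl O_{q^2})=\bb Z_{q^2-1}$ and $K_1(\cl O_{q^2})=0$ \cite{ca}. Because $K_0(\cl A_0)$ and $K_1(\cl A_0)$ are free, every $\operatorname{Tor}$ term in the K\"unneth short exact sequence vanishes, so the two terms split off and the K-theory is read directly from the tensor products of the groups:
\begin{equation*}
K_0(\cl A_0\otimes\cl O_{q^2})\cong \bb Z\otimes\bb Z_{q^2-1}\cong\bb Z_{q^2-1},
\qquad
K_1(\cl A_0\otimes\cl O_{q^2})\cong \bb Z\otimes\bb Z_{q^2-1}\cong\bb Z_{q^2-1}.
\end{equation*}
Thus $\cl A_0\otimes\cl O_{q^2}$ has K-theory $(\bb Z_{q^2-1},\bb Z_{q^2-1})$.

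Finally, by Proposition~\ref{basicK} we have $K_0(\fk A_\Gamma)\cong K_1(\fk A_\Gamma)\cong\bb Z_{q^2-1}$, so $\fk A_\Gamma$ and $\cl A_0\otimes\cl O_{q^2}$ have isomorphic $\bb Z/2$-graded K-theory. The stable form of the classification theorem \cite{k} then delivers the asserted stable isomorphism, exactly as in the proof of the Corollary.

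I expect the only point requiring genuine care to be the bookkeeping that confirms the tensor product $\cl A_0\otimes\cl O_{q^2}$ genuinely inherits pure infiniteness and simplicity (so that the hypotheses of the classification theorem are met); the K-theory computation itself is immediate, since the freeness of $K_*(\cl A_0)$ annihilates every torsion contribution in the K\"unneth sequence and leaves only the tensor-product terms.
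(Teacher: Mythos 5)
Your proof is correct and is essentially the paper's own argument: the paper likewise computes $K_*(\cl A_0\otimes\cl O_{q^2})=(\bb Z_{q^2-1},\bb Z_{q^2-1})$ via the K\"unneth theorem \cite[Theorem 23.1.3]{bl} and then invokes the classification theorem for p.i.s.u.n.\ algebras satisfying the U.C.T.\ to conclude the stable isomorphism with $\fk A_\Gamma$. Your write-up simply makes explicit the details the paper leaves implicit (the vanishing of the Tor terms because $K_*(\cl A_0)$ is free, and the verification that the tensor product inherits the hypotheses of the classification theorem).
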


\begin{proof}
The K\"unneth Theorem for tensor products \cite[Theorem 23.1.3]{bl} shows that
$K_*(\cl A_1 \otimes \cl A_2)= (\bb Z_{q^2-1},\bb Z_{q^2-1})$.
Since the algebras involved are all p.i.s.u.n. and satisfy the U.C.T., the result follows from the Classification Theorem \cite{ad}.
\end{proof}

\end{document}